\documentclass[12pt,reqno]{amsart}
\usepackage[margin=0.7in]{geometry}
\usepackage[colorlinks,linkcolor=blue,citecolor=blue,urlcolor=magenta,bookmarks=false,hypertexnames=true]{hyperref}
 \usepackage{amssymb,amsfonts,amscd,amsbsy, color, amsmath}
\usepackage[mathscr]{eucal}
\usepackage{url}
\usepackage{graphicx}
\usepackage{mathtools}

\newtheorem{theorem}{Theorem}[section]
\newtheorem{lemma}[theorem]{Lemma}

\theoremstyle{definition}
\newtheorem{remark}[theorem]{Remark}
\numberwithin{equation}{section}

\makeatletter
\def\imod#1{\allowbreak\mkern5mu({\operator@font mod}\,\,#1)}
\makeatother
\allowdisplaybreaks

\begin{document}
\title[Combinatorial identities]{Combinatorial identities associated with a bivariate generating function for overpartition pairs}
\author{Atul Dixit}

\author{Ankush Goswami}

\address{Discipline of Mathematics, Indian Institute of Technology Gandhinagar, Palaj, Gandhinagar 382055, Gujarat, India}

\email{adixit@iitgn.ac.in}
\email{ankushgoswami3@gmail.com\\agoswami@iitgn.ac.in}
\subjclass[2020]{Primary 11P81, 11P84; Secondary 33D15, 05A17, 11F37}
\keywords{overpartition pairs, Chebyshev polynomials, quintuple product identity, theta series, eta-quotients}
\maketitle
\begin{abstract}
We obtain a three-parameter $q$-series identity that generalizes two results of Chan and Mao. By specializing our identity, we derive new results of combinatorial significance in connection with $N(r, s, m, n)$, a function counting certain overpartition pairs recently introduced by Bringmann, Lovejoy and Osburn. For example, one of our identities gives a closed-form evaluation of a double series in terms of Chebyshev polynomials of the second kind, thereby resulting in an analogue of Euler's pentagonal number theorem. Another of our results expresses a multi-sum involving $N(r, s, m, n)$ in terms of just the partition function $p(n)$. Using a result of Shimura we also relate a certain double series with a weight 7/2 theta series.
\end{abstract}
\section{Introduction and main results}
A \textit{partition} of a natural number $n$ is the number of ways of writing $n$ as a sum of natural numbers in a non-increasing order. The partition function $p(n)$ enumerates the number of partitions of $n$. Euler showed that the generating function of $p(n)$ is
\begin{eqnarray}\label{gfpar}
1+\sum_{n=1}^\infty p(n)q^n=\frac{1}{(q;q)_\infty},
\end{eqnarray}
where we assume throughout that $|q|<1$, and for $A\in\mathbb{C}$, use the notation
\begin{align}\label{poch}
(A)_n=(A;q)_{n}:=\prod_{j=1}^{n}(1-Aq^{j-1}),\hspace{3mm}
(A)_{\infty}=(A;q)_\infty=\lim_{n\to\infty} (A;q)_n.
\end{align}
The function $p(n)$ satisfies amazing congruences discovered by Ramanujan, namely,
\begin{eqnarray}\label{Rama}
p(5n+4)\equiv 0\pmod{5},\hspace{0.5cm}p(7n+5)\equiv 0\pmod{7},\hspace{0.5cm}p(11n+6)\equiv 0\pmod{11}.
\end{eqnarray}
One of the various ways to prove \eqref{Rama} is to note that the generating function on the right-hand side of \eqref{gfpar} is essentially a half-integral weight (meromorphic) modular form. The congruences in \eqref{Rama} then follow easily using the well-known theory of modular forms; see, for example, \cite{GJS}. For more information on this topic, we refer the reader to \cite[Chapter 2]{spirit} and \cite[Chapter 5]{web}.

In \cite{gea266}, Andrews discovered the remarkable smallest parts partition function $\textup{spt}(n)$. It counts the total number of smallest parts in all partitions of $n$. The generating function of spt$(n)$ is given by \cite[p.~138]{gea266}
\begin{eqnarray}\label{gfspt}
\sum_{n=1}^\infty \text{spt}(n)q^n=\sum_{n=1}^\infty\dfrac{q^n}{(1-q^n)^2(q^{n+1};q)_\infty}.
\end{eqnarray}
The generating function on the right-hand side of \eqref{gfspt} is essentially a mock modular form as shown by Folsom and Ono \cite[Lemma 2.1]{FO} but not a modular form. Nevertheless, it is surprising to note that spt$(n)$ satisfies the following remarkable congruences found by Andrews \cite{gea266} which are reminiscent of  Ramanujan's congruences for $p(n)$ given in \eqref{Rama}:
\begin{eqnarray}\label{And}
\textup{spt}(5n+4)\equiv 0\pmod{5},\hspace{0.5cm}\textup{spt}(7n+5)\equiv 0\pmod{7},\hspace{0.5cm}\textup{spt}(13n+6)\equiv 0\pmod{13}.
\end{eqnarray}
To prove \eqref{And}, Andrews first establishes an identity connecting $p(n),\;\text{spt}(n)$ and $N_2(n)$, where $N_2(n)$ is the Atkin-Garvan second rank moment \cite{atkin-garvan}. His
 identity is \cite[Theorem 3]{gea266}
\begin{align}\label{sptpn}
\textup{spt}(n)=np(n)-\frac{1}{2}N_2(n).\end{align}
This identity is, in turn, proven by him by appropriately specializing Watson's $q$-analogue of Whipple's theorem thereby resulting in \cite[p.~138]{gea266}
\begin{equation}\label{watspl}
\sum_{n=0}^{\infty}\frac{(z)_{n}(z^{-1})_{n}q^n}{(q)_{n}}=\frac{(zq)_{\infty}(z^{-1}q)_{\infty}}{(q)_{\infty}^{2}}\left(1+\sum_{n=1}^{\infty}\frac{(-1)^nq^{n(3n+1)/2}(1+q^n)(z)_{n}(z^{-1})_{n}}{(zq)_{n}(z^{-1}q)_{n}}\right).
\end{equation}
The rest of the proof proceeds in a magical fashion and requires the following ``differentiation identity'' of Andrews \cite[Equation (2.4)]{gea266}:
\begin{eqnarray}\label{diff2}
-\dfrac{1}{2}\left[\dfrac{d^2}{dz^2}(zq;q)_\infty(z^{-1}q;q)_\infty\right]_{z=1}=(q;q)_\infty^2\sum_{n=1}^\infty\dfrac{nq^n}{1-q^n},
\end{eqnarray}
with the help of which he obtains
\begin{equation}\label{new}
\sum_{n=1}^{\infty}\frac{q^n}{(1-q^{n})^2(q^{n+1};q)_{\infty}}=\frac{1}{(q;q)_{\infty}}\sum_{n=1}^{\infty}\frac{nq^n}{1-q^n}+\frac{1}{(q;q)_{\infty}}\sum_{n=1}^{\infty}\frac{(-1)^nq^{n(3n+1)/2}(1+q^n)}{(1-q^n)^2},
\end{equation}
which is nothing but the generating function version of \eqref{sptpn}.

Notice that the expression $(zq;q)_\infty(z^{-1}q;q)_\infty$ appearing in \eqref{watspl} is essentially the product involving the variable $z$ occurring in the Jacobi triple product identity. Our present work was realized from our quest to seek analogues of \eqref{watspl} and \eqref{diff2} together wherein the expression $(zq;q)_\infty(z^{-1}q;q)_\infty$ is replaced by the analogous expression
\begin{align}\label{Dzq}
D(z,q):=(zq;q)_\infty(z^{-1}q;q)_\infty(z^2q;q^2)_\infty(z^{-2}q;q^2)_\infty=\frac{(z^2q;q)_\infty(z^{-2}q;q)_\infty}{(-zq;q)_\infty(-z^{-1}q;q)_\infty},
\end{align}
which arises in the quintuple product identity (see \eqref{QPI1} below). One of the reasons this is important is because \eqref{watspl} and \eqref{diff2} were instrumental in obtaining \eqref{sptpn}.  The desired analogue of \eqref{watspl} is stated in the theorem below, that is,
\begin{theorem}\label{spt-ana}
For $z\in\mathbb{C}$ and $z\not\in\{0,e^{\pm\pi i/3}, -q^{j}, j\in\mathbb{Z}\setminus\{0\}\}$ and $|q|<1$,
\begin{align}\label{spt-anaeqn}
\scalebox{0.95}{$\displaystyle\sum_{n\geq 0}\dfrac{(z^{2};q)_n(z^{-2};q)_nq^n}{(-zq;q)_n(-z^{-1}q;q)_n}=(1-z)(1-z^{-1})\left[\dfrac{-1}{z(1-z^{-1}+z^{-2})}\cdot\dfrac{(z^{-2}q,z^2q)_\infty}{(-z^{-1}q,-zq)_\infty}\right]+\dfrac{(1+z^{-1})}{z(1+z^{-3})}.$}
\end{align}
\end{theorem}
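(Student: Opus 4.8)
The plan is to establish \eqref{spt-anaeqn} by an elementary telescoping (Abel summation) of the series on its left. That series equals the balanced ${}_3\phi_2\!\left(z^{2},z^{-2},q;-zq,-z^{-1}q;q,q\right)$, so \eqref{spt-anaeqn} is in effect a non-terminating summation formula for it; however, the route below is self-contained and is in the spirit of---though simpler than---Andrews' passage from \eqref{watspl} to \eqref{new}, since no analogue of the differentiation identity \eqref{diff2} is needed. Fix $z$ as in the statement and, for the moment, also $z\neq-1$; then $|q|<1$ makes every Pochhammer symbol below nonzero and makes the series converge absolutely (the ratio of consecutive terms tends to $q$). Put
\[
 A:=\frac{1}{z^{2}+z+z^{-1}+z^{-2}}=\frac{1}{(z+z^{-1}+2)(z+z^{-1}-1)},
\]
which is finite precisely for $z\notin\{-1,e^{\pm\pi i/3}\}$, and set, for $n\geq1$,
\[
 c_{n}:=A\,\frac{(z^{2};q)_{n}(z^{-2};q)_{n}}{(-zq;q)_{n-1}(-z^{-1}q;q)_{n-1}}.
\]

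The one computation is the telescoping relation, valid for every $n\geq1$:
\[
 \frac{(z^{2};q)_{n}(z^{-2};q)_{n}\,q^{n}}{(-zq;q)_{n}(-z^{-1}q;q)_{n}}=c_{n}-c_{n+1}.
\]
To prove it, use $(z^{\pm2};q)_{n+1}=(1-z^{\pm2}q^{n})(z^{\pm2};q)_{n}$ and $(-z^{\pm1}q;q)_{n}=(1+z^{\pm1}q^{n})(-z^{\pm1}q;q)_{n-1}$ to extract the common factor $A\,(z^{2};q)_{n}(z^{-2};q)_{n}/\bigl[(-zq;q)_{n}(-z^{-1}q;q)_{n}\bigr]$ from $c_{n}-c_{n+1}$; what remains is the elementary polynomial identity
\[
 (1+zq^{n})(1+z^{-1}q^{n})-(1-z^{2}q^{n})(1-z^{-2}q^{n})=(z+z^{-1}+z^{2}+z^{-2})\,q^{n}=A^{-1}q^{n}.
\]
Summing over $n\geq1$, adding the $n=0$ term (which equals $1$), and using that both the series and the sequence $(c_{n})$ converge, we get
\[
 \sum_{n\geq0}\frac{(z^{2};q)_{n}(z^{-2};q)_{n}\,q^{n}}{(-zq;q)_{n}(-z^{-1}q;q)_{n}}=1+c_{1}-\lim_{n\to\infty}c_{n}.
\]
Here $c_{1}=A(1-z^{2})(1-z^{-2})$, while as $n\to\infty$ the numerator and denominator of $c_{n}$ tend to $(z^{2};q)_{\infty}(z^{-2};q)_{\infty}$ and $(-zq;q)_{\infty}(-z^{-1}q;q)_{\infty}$; writing $(z^{\pm2};q)_{\infty}=(1-z^{\pm2})(z^{\pm2}q;q)_{\infty}$ and using the second expression for $D(z,q)$ in \eqref{Dzq} gives $\lim_{n\to\infty}c_{n}=A(1-z^{2})(1-z^{-2})\,D(z,q)$. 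Hence the left side of \eqref{spt-anaeqn} equals $1+A(1-z^{2})(1-z^{-2})\bigl(1-D(z,q)\bigr)$.

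It remains to identify this with the right side of \eqref{spt-anaeqn}. From $(1-z^{2})(1-z^{-2})=2-z^{2}-z^{-2}$, multiplying the numerator and denominator of $A(1-z^{2})(1-z^{-2})$ by $z^{2}$ gives $-(z^{2}-1)^{2}/\bigl[(z+1)^{2}(z^{2}-z+1)\bigr]=-(z-1)^{2}/(z^{2}-z+1)$, and consequently $1-(z-1)^{2}/(z^{2}-z+1)=z/(z^{2}-z+1)$; thus the left side of \eqref{spt-anaeqn} equals $\bigl(z+(z-1)^{2}D(z,q)\bigr)/(z^{2}-z+1)$. One now verifies directly---using $(z^{-2}q,z^{2}q)_{\infty}/(-z^{-1}q,-zq)_{\infty}=D(z,q)$ from \eqref{Dzq}, together with $z(1-z^{-1}+z^{-2})=z^{-1}(z^{2}-z+1)$, $\,(1-z)(1-z^{-1})=-z^{-1}(z-1)^{2}$, and $z(1+z^{-3})=z^{-2}(z+1)(z^{2}-z+1)$---that this equals the right side of \eqref{spt-anaeqn}. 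Finally, the case $z=-1$ set aside above (where $A$ is undefined) follows by continuity, or by inspection: there $(z^{2};q)_{n}=(1;q)_{n}=0$ for $n\geq1$, so the left side is $1$, while on the right $D(-1,q)=1$ and the last summand $\frac{1+z^{-1}}{z(1+z^{-3})}$, of the form $0/0$ there, takes its removable value $z/(z^{2}-z+1)=-1/3$, again giving $1$.

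The argument has no genuinely difficult step; the one idea is the choice of $c_{n}$. It is forced by demanding that $\lim_{n}c_{n}$ be a constant multiple of the infinite product $D(z,q)$---which dictates the index shift that puts the numerators $(z^{\pm2};q)_{n}$ rather than $(z^{\pm2}q;q)_{n-1}$ in $c_{n}$---after which the constant $A$ is determined by the elementary quadratic identity above. Everything else is routine bookkeeping: absolute convergence, the nonvanishing of denominators under the stated restrictions on $z$, the interchange of limit and summation, and the rational-function algebra relating $z^{2}-z+1$ to the factors appearing in \eqref{spt-anaeqn}.
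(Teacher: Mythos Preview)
Your proof is correct and takes a genuinely different route from the paper. The paper derives Theorem~\ref{spt-ana} as a specialization: it first proves the three-parameter identity of Theorem~\ref{spt-general} from Agarwal's transformation (Theorem~\ref{Aga-id}) together with the $q$-binomial theorem, then specializes to obtain Chan--Mao's identity~\eqref{chanmao1}, and finally sets $x=z^{2}$, $z\mapsto -z$ to reach~\eqref{spt-anaeqn}. Your argument, by contrast, bypasses all of this machinery with a single Abel-type telescoping step, the key being the elementary identity $(1+zq^{n})(1+z^{-1}q^{n})-(1-z^{2}q^{n})(1-z^{-2}q^{n})=(z+z^{-1}+z^{2}+z^{-2})q^{n}$. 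What your approach buys is a short, self-contained proof of~\eqref{spt-anaeqn} requiring nothing beyond first principles; what the paper's approach buys is the more general Theorem~\ref{spt-general}, from which both Chan--Mao identities~\eqref{chanmao1}--\eqref{chanmao2} (not just~\eqref{spt-anaeqn}) drop out, and which is of independent interest. Your telescoping would in fact adapt to prove Theorem~\ref{spt-general} directly as well, since for general $\alpha,\beta,\gamma$ one has $(1-\beta q^{n})(1-\alpha\gamma\beta^{-1}q^{n+1})-(1-\alpha q^{n})(1-\gamma q^{n})=(\alpha+\gamma-\beta-\alpha\gamma\beta^{-1}q)q^{n}$, so the same device yields~\eqref{mainid} without appeal to Agarwal's result.
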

The corresponding analogue of \eqref{diff2} is
\begin{theorem}\label{Quintupledoublederi}
We have
\begin{align*}
&\left[\dfrac{d^2}{dz^2}(zq;q)_\infty(z^{-1}q;q)_\infty(z^2q;q^2)_\infty(z^{-2}q;q^2)_\infty\right]_{z=1}\nonumber\\
&=-2(q;q)_\infty^2(q;q^2)_\infty^2\left\{3\sum_{n=1}^\infty\dfrac{nq^n}{1-q^n}+2\sum_{n=1}^\infty\dfrac{(2n-1)q^{2n-1}}{1-q^{2n-1}}\right\}.
\end{align*}
\end{theorem}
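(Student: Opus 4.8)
The plan is to compute the second derivative by logarithmic differentiation, exploiting the symmetry $z\mapsto 1/z$ of $D(z,q)$, and then to rewrite the resulting Lambert series.

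First I would set $F(z):=D(z,q)=(zq;q)_\infty(z^{-1}q;q)_\infty(z^2q;q^2)_\infty(z^{-2}q;q^2)_\infty$, noting $F(1)=(q;q)_\infty^2(q;q^2)_\infty^2$ and $F(1/z)=F(z)$, since $z\mapsto 1/z$ merely permutes the four Pochhammer factors. Substituting $z=e^t$, the function $H(t):=F(e^t)$ is even in $t$, so $H'(0)=0$; since $H'(t)=e^tF'(e^t)$ and $H''(t)=e^{2t}F''(e^t)+e^tF'(e^t)$, this gives $F'(1)=0$ and $F''(1)=H''(0)$. Writing $H=\exp(\log H)$ and using $H'(0)=0$ then yields $F''(1)=H''(0)=H(0)\,(\log H)''(0)=(q;q)_\infty^2(q;q^2)_\infty^2\,(\log H)''(0)$, so the whole problem reduces to computing $(\log H)''(0)$.

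Next I would expand $\log H(t)=\sum_{n\ge1}\log(1-e^tq^n)+\sum_{n\ge1}\log(1-e^{-t}q^n)+\sum_{n\ge1}\log(1-e^{2t}q^{2n-1})+\sum_{n\ge1}\log(1-e^{-2t}q^{2n-1})$ and differentiate twice term by term (justified by local uniform convergence of the differentiated series for $|q|<1$). An elementary computation gives, for each $n$, $\tfrac{d^2}{dt^2}\log(1-e^{\pm t}q^n)\big|_{t=0}=-q^n/(1-q^n)^2$ and $\tfrac{d^2}{dt^2}\log(1-e^{\pm 2t}q^{2n-1})\big|_{t=0}=-4q^{2n-1}/(1-q^{2n-1})^2$, whence $(\log H)''(0)=-2\sum_{n\ge1}\frac{q^n}{(1-q^n)^2}-8\sum_{n\ge1}\frac{q^{2n-1}}{(1-q^{2n-1})^2}$.

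Finally I would convert these to the Lambert series in the statement. Using the classical identity $\sum_{n\ge1}\frac{q^n}{(1-q^n)^2}=\sum_{n\ge1}\frac{nq^n}{1-q^n}$ (both equal $\sum_m\sigma_1(m)q^m$), splitting the left side into odd and even $n$, and recognizing the even part as $\sum_{n\ge1}\frac{nq^{2n}}{1-q^{2n}}$, one gets $\sum_{n\ge1}\frac{q^{2n-1}}{(1-q^{2n-1})^2}=\sum_{n\ge1}\frac{nq^n}{1-q^n}-\sum_{n\ge1}\frac{nq^{2n}}{1-q^{2n}}$. Comparing divisor sums also gives $\sum_{n\ge1}\frac{(2n-1)q^{2n-1}}{1-q^{2n-1}}=\sum_{n\ge1}\frac{nq^n}{1-q^n}-2\sum_{n\ge1}\frac{nq^{2n}}{1-q^{2n}}$, and eliminating the $q^{2n}$-series between these two relations yields $\sum_{n\ge1}\frac{q^{2n-1}}{(1-q^{2n-1})^2}=\tfrac12\sum_{n\ge1}\frac{nq^n}{1-q^n}+\tfrac12\sum_{n\ge1}\frac{(2n-1)q^{2n-1}}{1-q^{2n-1}}$. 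Substituting back gives $(\log H)''(0)=-6\sum\frac{nq^n}{1-q^n}-4\sum\frac{(2n-1)q^{2n-1}}{1-q^{2n-1}}=-2\bigl\{3\sum\frac{nq^n}{1-q^n}+2\sum\frac{(2n-1)q^{2n-1}}{1-q^{2n-1}}\bigr\}$, and multiplying by $(q;q)_\infty^2(q;q^2)_\infty^2$ completes the proof. I expect the only genuinely delicate step to be this last one: it is tempting but false to identify $\sum\frac{q^{2n-1}}{(1-q^{2n-1})^2}$ with $\sum\frac{(2n-1)q^{2n-1}}{1-q^{2n-1}}$, and pinning down the constants $3$ and $2$ requires precisely the extra divisor-sum bookkeeping above; the symmetry reduction, the term-by-term differentiation, and the derivative evaluations are all routine.
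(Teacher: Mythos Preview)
Your proof is correct, and the approach is genuinely different from the paper's. The paper first invokes the quintuple product identity to write
\[
(q;q)_\infty\,D(z,q)=\sum_{n=-\infty}^{\infty}q^{(3n^2+n)/2}\frac{1-z^{6n+1}}{1-z},
\]
differentiates this polynomial-in-$z$ expression twice at $z=1$ to obtain $\frac{1}{(q;q)_\infty}\sum_n n(3n+1)(6n+1)q^{(3n^2+n)/2}$, and then recognizes this as $\frac{2q}{(q;q)_\infty}\frac{d}{dq}\sum_n(6n+1)q^{(3n^2+n)/2}$, which is evaluated via the nontrivial Ramanujan--Gordon identity $\sum_n(6n+1)q^{(3n^2+n)/2}=(q;q)_\infty^3(q;q^2)_\infty^2$; the final Lambert series arises from logarithmic differentiation of this product \emph{in $q$}. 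By contrast, you never leave the $z$-variable: the symmetry reduction $F''(1)=F(1)(\log H)''(0)$ together with termwise differentiation of $\log D(e^t,q)$ gives the answer directly, and the only remaining work is the divisor-sum bookkeeping converting $\sum q^{2n-1}/(1-q^{2n-1})^2$ into $\tfrac12\sum nq^n/(1-q^n)+\tfrac12\sum(2n-1)q^{2n-1}/(1-q^{2n-1})$. Your route is more elementary---it needs neither the quintuple product identity nor the Ramanujan--Gordon evaluation---while the paper's route makes explicit the theta-function provenance of the constants $3$ and $2$, which is thematically aligned with its later use of Shimura's result.
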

Observe that the expression $D(z,q)$ in \eqref{Dzq} occurs in Theorems \ref{spt-ana} and \ref{Quintupledoublederi} in a way similar to how $(zq;q)_\infty(z^{-1}q;q)_\infty$ appears in \eqref{watspl} and \eqref{diff2}. Differentiating both sides of \eqref{spt-anaeqn} with respect to $z$ twice and letting $z=1$ leads to an analogue of \eqref{new} given below:
\begin{eqnarray}\label{blospt}
4\sum_{n=1}^\infty\dfrac{(q)_{n-1}^2q^n}{(-q)_n^2}&=&-\dfrac{(q)_\infty^2}{(-q)_\infty^2}+1=-\dfrac{\eta(\tau)^4}{\eta(2\tau)^2}+1.
\end{eqnarray}
It is easy to see that the expression on the left-hand side is (essentially) a modular form. The identity \eqref{blospt} is already obtained in \cite[Remark 1.4]{BLO}. However, one of our goals in this paper was to obtain the intermediate identity \eqref{spt-anaeqn} which is not given in \cite{BLO}, for, it gives, as special cases, some new results of combinatorial significance.

Before we discuss these new results, we show that Theorem \ref{spt-ana} follows as a special case of a more general identity which we establish in the following theorem.
\begin{theorem}\label{spt-general}
For $\alpha, \gamma\in\mathbb{C}$, and $\beta\in\mathbb{C}$ except possibly in the set $\{0, \alpha q, \gamma q, q^{-j}, \alpha\gamma q^{j+2}: j\geq 0\}$, we have

\begin{eqnarray}\label{mainid}
\sum_{n=0}^\infty\dfrac{(\alpha,\gamma)_n}{(\beta,\alpha\gamma q^2/\beta)_n}q^n=\dfrac{\beta^{-1}q}{(1-\alpha q/\beta)(1-\gamma q/\beta)}\cdot \dfrac{(\alpha,\gamma)_\infty}{(\beta,\alpha\gamma q^2/\beta)_\infty}+\dfrac{(1-q/\beta)(1-\alpha\gamma q/\beta)}{(1-\gamma q/\beta)(1-\alpha q/\beta)}.
\end{eqnarray}
\end{theorem}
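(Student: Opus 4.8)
The plan is to prove \eqref{mainid} by exhibiting the left-hand side as a telescoping series. Write its $n$th term as $t_n:=\dfrac{(\alpha,\gamma)_n}{(\beta,\alpha\gamma q^2/\beta)_n}\,q^n$ and abbreviate $\delta:=\alpha\gamma q^2/\beta$, so that $\beta\delta=\alpha\gamma q^2$ and $(\beta-\alpha q)(\beta-\gamma q)=\beta^2-\beta q(\alpha+\gamma)+\alpha\gamma q^2$. I would look for a sequence $u_n$ with $t_n=u_n-u_{n+1}$ for every $n\ge 0$; guided by the shape of the right-hand side of \eqref{mainid} (or, mechanically, by the $q$-analogue of Gosper's algorithm applied to $t_n$), the candidate is
\begin{equation*}
u_n:=\kappa\,\frac{(\alpha,\gamma)_n}{(\beta,\delta)_{n-1}},\qquad\text{where}\qquad\kappa:=\frac{-q/\beta}{(1-\alpha q/\beta)(1-\gamma q/\beta)},
\end{equation*}
with the standard extension $(A;q)_{-1}=1/(1-A/q)$ of \eqref{poch}, so that in particular $u_0=\kappa(1-\beta/q)(1-\delta/q)$.

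The identity $t_n=u_n-u_{n+1}$ is then a one-line verification: pulling $\dfrac{(\alpha,\gamma)_n}{(\beta,\delta)_n}$ out of $u_n-u_{n+1}$ reduces the claim to
\begin{equation*}
(1-\beta q^{n-1})(1-\delta q^{n-1})-(1-\alpha q^n)(1-\gamma q^n)=\Big[(\alpha+\gamma)-\tfrac{\beta+\delta}{q}\Big]q^n,
\end{equation*}
in which the coefficients of $q^{2n}$ cancel precisely because $\beta\delta=\alpha\gamma q^2$, while one checks directly that $\kappa^{-1}=(\alpha+\gamma)-(\beta+\delta)/q$, using $\beta+\delta=\beta+\alpha\gamma q^2/\beta$ and the factorization of $(\beta-\alpha q)(\beta-\gamma q)$ above. (The borderline case $n=0$ is subsumed once one notes $t_0=1=u_0-u_1$.) It is worth remarking that the hypotheses on $\beta$ are exactly those needed for this manipulation: $\beta\notin\{\alpha q,\gamma q\}$ keeps $\kappa$ finite, $\beta\notin\{q^{-j}:j\ge 0\}$ and $\beta\notin\{\alpha\gamma q^{j+2}:j\ge 0\}$ keep the denominators of $t_n$ and $u_n$ nonzero, and $\beta\ne 0$ is obvious; and since $(\alpha,\gamma)_n/(\beta,\delta)_n$ is bounded in $n$, the series in \eqref{mainid} converges for $|q|<1$.

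With the telescoping in place, $\sum_{n=0}^N t_n=u_0-u_{N+1}$, so it only remains to pass to the limit. Each $q$-Pochhammer factor in $u_{N+1}=\kappa\,(\alpha,\gamma)_{N+1}/(\beta,\delta)_N$ converges as $N\to\infty$, giving $\lim_{N\to\infty}u_{N+1}=\kappa\,(\alpha,\gamma)_\infty/(\beta,\delta)_\infty$, hence
\begin{equation*}
\sum_{n=0}^\infty t_n=u_0-\kappa\,\frac{(\alpha,\gamma)_\infty}{(\beta,\delta)_\infty}.
\end{equation*}
Finally I would substitute the values of $\kappa$ and $\delta$ and simplify $u_0=\kappa(1-\beta/q)(1-\delta/q)$ via $-\tfrac{q}{\beta}\big(1-\tfrac{\beta}{q}\big)=1-\tfrac{q}{\beta}$; the two resulting terms are exactly the right-hand side of \eqref{mainid}. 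I do not expect a genuine obstacle here: the only real content is locating the telescoper $u_n$, after which every step is forced, the sole points needing care being the bookkeeping at $n=0$ and the check that the stated exclusions on $\beta$ are precisely the admissible ones. (One could instead deduce \eqref{mainid} from the non-terminating $q$-Saalsch\"utz summation --- the left side being the balanced ${}_3\phi_2(\alpha,\gamma,q;\beta,\alpha\gamma q^2/\beta;q,q)$, whose companion series collapses, since a numerator parameter equals $q$, to a ${}_2\phi_1$ evaluable by the $q$-Gauss theorem --- but the telescoping proof is shorter and self-contained, so I would present that.)
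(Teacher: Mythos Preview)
Your telescoping argument is correct. The verification that $u_n-u_{n+1}=t_n$ goes through exactly as you describe: the $q^{2n}$ terms cancel by $\beta\delta=\alpha\gamma q^2$, and the remaining linear part matches $\kappa^{-1}$ by the factorization of $(\beta-\alpha q)(\beta-\gamma q)$. The evaluation of $u_0$ and of $\lim_{N\to\infty}u_{N+1}$ then reproduces the two summands on the right of \eqref{mainid}.

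Your route is genuinely different from the paper's. The paper specializes a three-term transformation of Agarwal (Theorem~\ref{Aga-id}) with $t=q$ and $\delta=\alpha\gamma q^2/\beta$; under this choice one of Agarwal's ${}_2\phi_1$ pieces collapses to a geometric series, and the remaining inner sum is closed via the $q$-binomial theorem. That argument has to be run first under the auxiliary restriction $|\gamma|<\min(1,|\beta/q|)$ and then extended by analytic continuation. Your approach bypasses all of this: it is self-contained, needs no external identity beyond elementary manipulation of $q$-Pochhammer symbols, and works directly on the full parameter range stated in the hypotheses. What the paper's method buys is the explicit link to Agarwal's identity (and hence to the broader family of three-term relations it sits in); what yours buys is brevity and a transparent explanation of \emph{why} the sum closes---the balancing condition $\beta\delta=\alpha\gamma q^2$ is precisely what makes the telescoper exist. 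Your parenthetical remark about recovering \eqref{mainid} from the non-terminating $q$-Saalsch\"utz summation is also a valid alternative and closer in spirit to the paper's use of a known transformation.
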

Chan and Mao \cite[Theorem 1.2]{chanmao} recently established the following two $q$-series identities.
\begin{align}
\sum_{n=0}^{\infty}\frac{(x,1/x;q)_nq^n}{(zq,q/z;q)_n}&=\frac{(1-z)^2}{(1-z/x)(1-xz)}+\frac{z(x,1/x;q)_{\infty}}{(1-z/x)(1-xz)(zq,q/z;q)_{\infty}},\label{chanmao1}\\
\sum_{n=0}^{\infty}\frac{(x,q/x;q)_nq^n}{(z,q/z;q)_{n+1}}&=\frac{1}{x(1-z/x)(1-q/(xz))}+\frac{(x,q/x;q)_{\infty}}{z(1-x/z)(1-q/(xz))(z,q/z;q)_{\infty}}.\label{chanmao2} 
\end{align}
We obtain these identities as special cases of Theorem \ref{spt-general}.

Theorem \ref{spt-ana}, in turn, gives the closed-form evaluations of certain bibasic sums such as
\begin{eqnarray}\label{spt-ana11}
4\sum_{n=0}^\infty\dfrac{(-q)_{n-1}^2q^n}{(-q^2;q^2)_n}=2\dfrac{(-q)_\infty^2}{(-q^2;q^2)_\infty}-1=2\dfrac{(q^2;q^2)_\infty^3}{(q)_\infty^2(q^4;q^4)_\infty}-1=2\dfrac{(q^2;q^4)_\infty}{(q;q^2)_\infty^2}-1,
\end{eqnarray}
and 
\begin{eqnarray}\label{spt-ana12}
1+3\sum_{n=1}^\infty\dfrac{(-q)_n(q^3;q^3)_{n-1}q^n}{(q)_{n-1}(-q^3;q^3)_n}=\dfrac{3}{2}\cdot\dfrac{(q^3;q^3)_\infty(-q)_\infty}{(-q^3;q^3)_\infty(q)_\infty}-\dfrac{1}{2}.
\end{eqnarray}
We note that \eqref{spt-ana11} is obtainable from \eqref{chanmao1} by letting $x=-1$ and $z=i$. Also, \eqref{spt-ana12} follows by letting $z=\pm \omega=\pm e^{2\pi i/3}$ in Theorem \ref{spt-ana}.

We now return to Theorem \ref{spt-ana}. The coefficients of the bivariate generating series in this theorem are connected to certain overpartition pairs considered by Bringmann, Lovejoy and Osburn \cite{BLO}. This is now explained.

An overpartition $\lambda$ of a positive integer $n$ is a partition in which the first (or the last) occurrence of a number may be overlined. An overpartition pair $(\lambda,\mu)$ of $n$ is a pair of overpartitions where the sum of all of the parts of $\lambda$ as well as $\mu$ is $n$. Let $\ell((\lambda, \mu))$ denote the largest part of the overpartition pair $(\lambda, \mu)$, that is, the maximum of the largest parts of $\lambda$ and $\mu$. Also, let $n(\pi)$ denote the number of parts of the partition $\pi$. Then the rank of an overpartition pair $(\lambda,\mu)$ is defined by
\begin{equation*}
\ell((\lambda,\mu))-n(\lambda)-n(\mu)-\chi((\lambda,\mu))    
\end{equation*}
where $\chi((\lambda,\mu))$ is defined to be 1 if the largest part of $(\lambda,\mu)$ is non-overlined and is in $\mu$, and
0 otherwise.

Let $N(r,s,m,n)$ denote the number of overpartition pairs of $n$ having rank $m$ such that $r$ is the number of overlined parts in $\lambda$ plus the number of non-overlined parts
in $\mu$, and $s$ is the number of parts in $\mu$. By specializing a result in \cite{Lovejoy}, it was shown in \cite{BLO} that
\begin{equation}\label{BLOrank}
N(d,e,z;q):=\sum_{\substack{r,s,n\geq 0\\m\in\mathbb{Z}}}N(r,s,m,n)d^re^sz^mq^n=\sum_{n\geq 0}\dfrac{(-1/d,-1/e)_n(deq)^n}{(zq,q/z)_n}. 
\end{equation}
This leads to 
\begin{eqnarray}\label{corBLO}
\sum_{n\geq 0}\dfrac{(z^{2};q)_n(z^{-2};q)_nq^n}{(-zq;q)_n(-z^{-1}q;q)_n}
&=&\sum_{\substack{r,s,n\geq 0\\m\in\mathbb{Z}}}N(r,s,m-2s+2r,n)(-1)^{r+s+m}z^m q^n.
\end{eqnarray}
More generally, letting $d=-x=e^{-1}$ in \eqref{BLOrank}, one can represent the left-hand side of \eqref{chanmao1} in terms of the function $N(r, s, m, n)$.

Using Theorem \ref{spt-ana} and \eqref{corBLO}, we obtain a closed-form evaluation of a double series involving $N(r,s,m,n)$ in terms of Chebyshev polynomials of the second kind $U_n(x)$ (see Section \ref{NP} for the definition and properties of $U_n(x)$). Before stating this result, we discuss the necessary setup. We define the following subsets of integers:

\begin{align}\label{subint}
&I_1:=(-\infty,-3n)\cap\mathbb{Z},\hspace{0.5cm}I_2:=\{-3n\},\hspace{0.5cm}I_3:=(-3n,1)\cap\mathbb{Z},\notag\\&I_4:=[1,3n+1)\cap\mathbb{Z},\hspace{0.5cm}I_5:=\{3n+1\},\hspace{0.5cm}I_6:=(3n+1,\infty)\cap\mathbb{Z},
\end{align}
and 
\begin{align}\label{subint1}
&I_1':=(-\infty,-3n]\cap\mathbb{Z},\hspace{0.5cm}I_2':=\{-3n+1\},\hspace{0.5cm}I_3':=(-3n+1,1)\cap\mathbb{Z},\notag\\&I_4':=[1,3n)\cap\mathbb{Z},\hspace{0.5cm}I_5':=\{3n\},\hspace{0.5cm}I_6':=[3n+1,\infty)\cap\mathbb{Z}.
\end{align}
Then we have the following result.
\begin{theorem}\label{BLOcoeff}
For $\ell\in\mathbb{Z}$, let $\omega_\ell:=\frac{3\ell^2+\ell}{2}$ be a pentagonal number. Then the coefficient of $z^mq^n$ in $(q)_{\infty}\displaystyle\sum_{n\geq 0}\dfrac{(z^{2};q)_n(z^{-2};q)_nq^n}{(-zq;q)_n(-z^{-1}q;q)_n}$ is $\displaystyle\sum_{\substack{r,s\geq 0\\0\leq \omega_k\leq n}}N\left(r,s,m-2s+2r,n-\omega_k\right)(-1)^{r+s+m+k}$,
and is given by
\begin{equation*}
\begin{cases}
0,&m\in I_1\\
U_1(1/2),&m\in I_2\\
U_{m+3n+1}(1/2),&m\in I_3\\
U_{m+3n+1}(1/2)+(-1)^nU_{m-1}(1/2),&m\in I_4\\
-U_1(1/2)+U_{6n+2}(1/2)+(-1)^nU_{3n}(1/2),&m\in I_5\\
-U_{m-3n}(1/2)+U_{m+3n+1}(1/2)+(-1)^nU_{m-1}(1/2),&m\in I_6
\end{cases}   
\end{equation*}
when $n=\omega_\ell, \;(\ell\geq 0)$, and by
\begin{equation*}
\begin{cases}
0,&m\in I_1'\\
-U_1(1/2),&m\in I_2'\\
-U_{m+3n}(1/2),&m\in I_3'\\
-U_{m+3n}(1/2)+(-1)^nU_{m-1}(1/2),&m\in I_4'\\
-U_{6n}(1/2)+U_1(1/2)+(-1)^nU_{3n-1}(1/2),&m\in I_5'\\
-U_{m+3n}(1/2)+U_{m-3n+1}(1/2)+(-1)^nU_{m-1}(1/2),&m\in I_6'
\end{cases}
\end{equation*}
when $n=\omega_{-\ell}, \;(\ell\geq 1)$.
\end{theorem}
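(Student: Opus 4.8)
The plan is to deduce the closed form from Theorem~\ref{spt-ana}, the quintuple product identity~\eqref{QPI1}, and the generating function of the Chebyshev polynomials $U_n(1/2)$. First, the multisum expression is immediate: by~\eqref{corBLO} the coefficient of $z^mq^n$ in $\sum_{n\geq0}\frac{(z^{2};q)_n(z^{-2};q)_nq^n}{(-zq;q)_n(-z^{-1}q;q)_n}$ equals $\sum_{r,s\geq0}N(r,s,m-2s+2r,n)(-1)^{r+s+m}$, so multiplying by $(q)_\infty=\sum_{k\in\mathbb{Z}}(-1)^kq^{\omega_k}$ (Euler's pentagonal number theorem) and extracting the coefficient of $z^mq^n$ gives the asserted sum over $r,s\ge0$ with $0\le\omega_k\le n$. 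Hence the real content is the evaluation in terms of $U_n(1/2)$.

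For that, I would multiply~\eqref{spt-anaeqn} by $(q)_\infty$ and simplify the rational prefactors, using $1-z+z^{2}=(1+z^{3})/(1+z)$, $(1-z)(1-z^{-1})=-(1-z)^{2}/z$, and the definition~\eqref{Dzq} of $D(z,q)$, to obtain
\begin{align*}
(q)_\infty\sum_{n\geq0}\frac{(z^{2};q)_n(z^{-2};q)_nq^n}{(-zq;q)_n(-z^{-1}q;q)_n}=\frac{(1-z)^{2}}{1-z+z^{2}}\,(q)_\infty D(z,q)+\frac{z}{1-z+z^{2}}\,(q)_\infty .
\end{align*}
Invoking~\eqref{QPI1} in the form $(1-z^{-1})(q)_\infty D(z,q)=\sum_{k\in\mathbb{Z}}\bigl(z^{3k}-z^{-3k-1}\bigr)q^{\omega_k}=:T(z,q)$, substituting $(q)_\infty D(z,q)=\tfrac{-z}{1-z}T(z,q)$, and using $\tfrac{-z(1-z)}{1-z+z^{2}}=1-\tfrac{1}{1-z+z^{2}}$, the right-hand side collapses to
\begin{align*}
(q)_\infty\sum_{n\geq0}\frac{(z^{2};q)_n(z^{-2};q)_nq^n}{(-zq;q)_n(-z^{-1}q;q)_n}=T(z,q)+\frac{z\,(q)_\infty-T(z,q)}{1-z+z^{2}} .
\end{align*}

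Since $T$ and $(q)_\infty$ are supported only on the exponents $q^{\omega_k}$, $k\in\mathbb{Z}$, and $k\mapsto\omega_k$ is injective, the coefficient of $q^n$ vanishes unless $n=\omega_\ell$ for a unique $\ell$ (which is why only pentagonal $n$ occur), and the displayed identity is legitimate because each coefficient $(-1)^kz-z^{3k}+z^{-3k-1}$ of $q^{\omega_k}$ in $z(q)_\infty-T(z,q)$ is divisible by $z^{2}-z+1$: at the roots $\zeta=e^{\pm i\pi/3}$ of $z^{2}-z+1$ one has $\zeta^{3k}=(-1)^k$, $\zeta^{-3k-1}=(-1)^k\zeta^{-1}$, $\zeta+\zeta^{-1}=1$, so the coefficient equals $(-1)^k(\zeta+\zeta^{-1}-1)=0$. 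Now fix $n=\omega_\ell$; the unique $k=\ell$ with $\omega_k=\omega_\ell$ makes $T$ contribute $z^{3\ell}-z^{-3\ell-1}$, and for the other term one expands $\frac{1}{1-z+z^{2}}=\sum_{j\geq0}U_j(\tfrac12)z^{j}$ (the case $x=\tfrac12$ of $\sum_{j\ge0}U_j(x)t^j=(1-2xt+t^{2})^{-1}$), multiplies by $(-1)^\ell z-z^{3\ell}+z^{-3\ell-1}$, and collects, so that with $[\,\cdot\,]$ the Iverson bracket the coefficient of $z^m$ at $q^{\omega_\ell}$ becomes
\begin{align*}
[m=3\ell]-[m=-3\ell-1]+(-1)^\ell U_{m-1}(\tfrac12)[m\ge1]-U_{m-3\ell}(\tfrac12)[m\ge3\ell]+U_{m+3\ell+1}(\tfrac12)[m\ge-3\ell-1].
\end{align*}

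Splitting this according to the sign of $\ell$ and the position of $m$ relative to $-3\ell$, $1$, $3\ell+1$ produces the two six-case lists: $I_1,\dots,I_6$ when $\ell\ge0$ and $I_1',\dots,I_6'$ when $\ell\le-1$. I expect this last bookkeeping to be the only delicate step --- tracking the Chebyshev index shifts across the six intervals, using the collapses $U_{m+3\ell+1}(\tfrac12)=U_1(\tfrac12)$ at $m=-3\ell$ and $U_{m-3\ell}(\tfrac12)=U_1(\tfrac12)$ at $m=3\ell+1$, and applying the recurrence $U_{j+1}(\tfrac12)=U_j(\tfrac12)-U_{j-1}(\tfrac12)$ to see that the three ``floating'' $U$-terms cancel once $|m|$ is large and that the formulas for adjacent cases agree on their shared endpoints. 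Everything before it --- the reduction of Theorem~\ref{spt-ana}, the single application of~\eqref{QPI1}, and the $z^{2}-z+1$ divisibility --- is routine.
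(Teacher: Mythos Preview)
Your proof is correct and follows essentially the same route as the paper: both use \eqref{corBLO} together with Euler's pentagonal number theorem for the multisum, invoke the quintuple product identity \eqref{QPI1} to replace $(q)_\infty D(z,q)$ by the bilateral sum you call $T(z,q)$, expand $1/(1-z+z^{2})=\sum_{j\ge0}U_j(1/2)z^{j}$, and finish with the interval-by-interval bookkeeping. The only organizational difference is that you first multiply by $(q)_\infty$ and use the identity $\tfrac{-z(1-z)}{1-z+z^{2}}=1-\tfrac{1}{1-z+z^{2}}$ to split off the Laurent-polynomial part $T(z,q)$, whereas the paper instead carries the factor $(1-z)$ through the Chebyshev expansion (the step $(1-z)\sum_{m\ge1}U_{m-1}(1/2)z^{m}=\sum_{m\ge1}U_m(1/2)z^{m}$ in \eqref{rhs-spt-Che}) and multiplies by $(q)_\infty$ only at the end; your divisibility check at $\zeta=e^{\pm i\pi/3}$, together with the remark that the three $U$-terms cancel for large $|m|$ via the recurrence, makes the Laurent-polynomial structure more transparent than the paper does, but the underlying computation is the same.
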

We also obtain a combinatorial identity expressing a multi-sum involving $N(r,s,m,n)$ explicitly in terms of just the partition function $p(n)$.
\begin{theorem}\label{ideNp}
Let 
\begin{equation}\label{an}
a(n):=\sum_{k=0}^{2n}(-1)^kp(k)p(2n-k).  
\end{equation}
Then
\begin{eqnarray*}
&&-\frac{1}{24}\sum_{\substack{r,s\geq 0\\m\in\mathbb{Z}}}\sum_{1\leq j\leq n}(-1)^{r+s+m}m^2(m^2-11)a(j)N(r,s,m-2s+2r,n-j).\notag\\
&&\hspace{3cm}=\sum_{k=-\infty} ^{\infty}(-1)^k\left(3(n-k^2)p(n-k^2)-2n(-1)^np(n-2k^2)\right).
\end{eqnarray*}
\end{theorem}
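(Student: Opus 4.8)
The plan is to recognise the left-hand side as one $q^{n}$-coefficient extracted from the closed form of Theorem~\ref{spt-ana}, after applying a fourth-order differential operator in $z$ and convolving with a generating function for the $a(j)$. Write $F(z,q):=\sum_{n\ge0}\frac{(z^{2};q)_n(z^{-2};q)_nq^n}{(-zq;q)_n(-z^{-1}q;q)_n}$ and $c(m,n):=[z^{m}q^{n}]F(z,q)$; by \eqref{corBLO}, $c(m,n)=\sum_{r,s\ge0}(-1)^{r+s+m}N(r,s,m-2s+2r,n)$, and since the summand of $F$ is invariant under $z\mapsto z^{-1}$ we have $c(m,n)=c(-m,n)$. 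Hence the operator $\mathcal D:=\bigl(z\frac{d}{dz}\bigr)^{2}\bigl(\bigl(z\frac{d}{dz}\bigr)^{2}-11\bigr)$, which multiplies $z^{m}$ by $m^{2}(m^{2}-11)$, satisfies $[\mathcal D F(z,q)]_{z=1}=\sum_{n\ge0}\bigl(\sum_{m\in\mathbb Z}m^{2}(m^{2}-11)c(m,n)\bigr)q^{n}$. Moreover $a(n)=[q^{2n}]\bigl((q;q)_\infty(-q;-q)_\infty\bigr)^{-1}$ and $(q;q)_\infty(-q;-q)_\infty=(q^{2};q^{4})_\infty(q^{2};q^{2})_\infty^{2}$ is a power series in $q^{2}$, so $\sum_{j\ge0}a(j)q^{j}=\bigl((q;q^{2})_\infty(q;q)_\infty^{2}\bigr)^{-1}$, which by Euler's identity $(q;q^{2})_\infty=(-q;q)_\infty^{-1}$ together with $(q;q^{2})_\infty(q;q)_\infty=\varphi(-q):=\sum_{k}(-1)^{k}q^{k^{2}}$ equals $\bigl(\varphi(-q)(q;q)_\infty\bigr)^{-1}$. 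Performing the $r,s$-sums and then the $j$-sum, the left-hand side of the theorem becomes
\[-\frac{1}{24}\,[q^{n}]\,\Bigl(\bigl(\varphi(-q)(q;q)_\infty\bigr)^{-1}-1\Bigr)\,[\mathcal D F(z,q)]_{z=1}.\]

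Next I would compute $[\mathcal D F]_{z=1}$ from Theorem~\ref{spt-ana}. With $u:=z+z^{-1}$ and $D(z,q)$ as in \eqref{Dzq}, that theorem gives $F(z,q)=\frac{u-2}{u-1}D(z,q)+\frac1{u-1}$. Both $D(z,q)$ and $\frac1{u-1}$ are invariant under $z\mapsto z^{-1}$, hence functions of $u$, and $u-2=(z-1)^{2}/z$; therefore the $z$-derivatives at $z=1$ of a symmetric function, up to order four, are linear combinations of its $u$-derivatives at $u=2$ of order at most two, all higher-order contributions being annihilated by the vanishing of $u^{2}-4$ there. (The poles of $\frac1{u-1}$ at $z=e^{\pm\pi i/3}$, where $u=1$, cancel against $\frac{u-2}{u-1}D(z,q)$ because $D(e^{\pm\pi i/3},q)=1$; thus $F$ is analytic on an annulus about $|z|=1$, which legitimises the coefficient manipulations above.) Carrying out the reduction — in particular, the factor $\frac{u-2}{u-1}$ vanishing at $u=2$ forces the fourth $z$-derivative of $\frac{u-2}{u-1}D$ at $z=1$ to depend only on $D(1,q)$ and on the second $z$-derivative of $D$ there — one obtains $[\mathcal D F]_{z=1}$ as a combination of $[\partial_z^{2}D(z,q)]_{z=1}$, $[\partial_z^{2}F(z,q)]_{z=1}$ and $D(1,q)=(q;q)_\infty^{2}/(-q;q)_\infty^{2}=\varphi(-q)^{2}$. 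Now $[\partial_z^{2}D]_{z=1}$ is precisely Theorem~\ref{Quintupledoublederi}, namely $-2\varphi(-q)^{2}\bigl\{3\sum_{n\ge1}\frac{nq^{n}}{1-q^{n}}+2\sum_{n\ge1}\frac{(2n-1)q^{2n-1}}{1-q^{2n-1}}\bigr\}$, while $[\partial_z^{2}F]_{z=1}$ is evaluated by \eqref{blospt}. Substituting and then multiplying by $\bigl(\varphi(-q)(q;q)_\infty\bigr)^{-1}$, the relation $\varphi(-q)^{2}/\bigl(\varphi(-q)(q;q)_\infty\bigr)=\varphi(-q)/(q;q)_\infty=(-q;q)_\infty^{-1}$ collapses the eta-quotients and leaves an expression of Lambert-series type with denominator $(-q;q)_\infty$.

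Finally I would identify the right-hand side. Putting $j=n-k^{2}$ in the first inner sum and $j=n-2k^{2}$ in the second, $\sum_{n\ge0}q^{n}\sum_{k}(-1)^{k}\bigl(3(n-k^{2})p(n-k^{2})-2n(-1)^{n}p(n-2k^{2})\bigr)$ equals $3\varphi(-q)\,q\frac{d}{dq}\frac1{(q;q)_\infty}-2q\frac{d}{dq}\frac{\varphi(-q^{2})}{(-q;-q)_\infty}$, the two halves of the second $k$-sum recombining by the product rule applied to $\varphi(-q^{2})\cdot(-q;-q)_\infty^{-1}$. Using $q\frac{d}{dq}\frac1{(q;q)_\infty}=\frac1{(q;q)_\infty}\sum_{n\ge1}\frac{nq^{n}}{1-q^{n}}$, $\varphi(-q)/(q;q)_\infty=(-q;q)_\infty^{-1}$, and $\varphi(-q^{2})/(-q;-q)_\infty=(q;q^{2})_\infty=(-q;q)_\infty^{-1}$, this becomes
\[\frac1{(-q;q)_\infty}\Bigl(3\sum_{n\ge1}\frac{nq^{n}}{1-q^{n}}+2\sum_{n\ge1}\frac{(2n-1)q^{2n-1}}{1-q^{2n-1}}\Bigr),\]
the same Lambert-series form as that reached from the left-hand side; comparing $q^{n}$-coefficients then finishes the proof.

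The step I expect to be the main obstacle is the reduction performed while evaluating $[\mathcal D F]_{z=1}$: expressing the fourth $z$-derivative at $z=1$ of the pole-carrying closed form $\frac{u-2}{u-1}D(z,q)+\frac1{u-1}$ purely in terms of $D(1,q)$ and $[\partial_z^{2}D]_{z=1}$, keeping exact track of all constants and of the cancellation of the spurious poles at $z=e^{\pm\pi i/3}$, so that after multiplication by the $a(j)$-series the outcome coincides with the Lambert-series form of the right-hand side.
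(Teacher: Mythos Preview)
Your overall strategy is the paper's: differentiate the identity of Theorem~\ref{spt-ana} four times in $z$, set $z=1$, feed in Theorem~\ref{Quintupledoublederi}, convolve with the generating function $\sum a(j)q^{j}=(-q;q)_\infty/(q;q)_\infty^{2}$ (the paper's Lemma~\ref{parlemma}), and compare $q^{n}$-coefficients. Two genuine differences are worth noting. On the left side, the paper does not introduce your operator $\mathcal D$ or the variable $u=z+z^{-1}$; instead it applies the ordinary fourth derivative and invokes Lemma~\ref{diffk}, which collapses $-\tfrac{1}{24}\bigl[\partial_z^{4}\bigl((1-z)(1-z^{-1})g(z)\bigr)\bigr]_{z=1}$ to $g(1)-g'(1)+\tfrac12 g''(1)$; this one line replaces the pole-tracking reduction you flag as the main obstacle and yields exactly $-\tfrac12 D''(1,q)$. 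The passage from $m(m-1)(m-2)(m-3)$ to $m^{2}(m^{2}-11)$ is then done \emph{a posteriori} via the symmetry $c(m,n)=c(-m,n)$ (the paper's \eqref{symmetry}--\eqref{symmetry1}). On the right side, your argument is actually more direct than the paper's: the paper first rewrites the Lambert series in terms of $p(n)$ and the self-conjugate partition function $p_{\mathrm{sc}}(n)$ and then proves a separate lemma (Lemma~\ref{claim1lemma}) expressing $p_{\mathrm{sc}}(n)$ through $p(n)$, whereas you reach the same Lambert form in one step via the product-rule identity $-2q\tfrac{d}{dq}\bigl(\varphi(-q^{2})/(-q;-q)_\infty\bigr)$ and $\varphi(-q^{2})/(-q;-q)_\infty=(q;q^{2})_\infty$.

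One caution about your $\mathcal D$: on a $z\leftrightarrow z^{-1}$-symmetric Laurent series, $\bigl[\partial_z^{4}\bigr]_{z=1}$ extracts $\sum_m(m^{4}+11m^{2})c_m$, so $\bigl[\mathcal D\bigr]_{z=1}=\bigl[\partial_z^{4}\bigr]_{z=1}-22\bigl[\partial_z^{2}\bigr]_{z=1}$. Consequently $-\tfrac{1}{24}\bigl[\mathcal D F\bigr]_{z=1}=-\tfrac12 D''(1,q)+\tfrac{11}{6}\bigl(D(1,q)-1\bigr)$, the extra piece coming from \eqref{blospt}. Thus the ``outcome coincides'' step will not close unless this second-derivative correction is carried along (equivalently, unless one works with $\partial_z^{4}$ and only afterwards uses symmetry, as the paper does). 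This is precisely the constant-tracking you anticipated as the obstacle; the paper's Lemma~\ref{diffk} is what makes it painless.
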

\begin{remark}
Observe that both sides of the above identity are finite sums. 
\end{remark}
Next, using Theorem \ref{spt-ana} and Theorem \ref{Quintupledoublederi}, we express a double series in terms of a linear combination of single series as follows:
\begin{theorem}\label{Th3}
We have
\begin{eqnarray*}
&&5\sum_{n=1}^\infty\dfrac{(q)_{n-1}^2q^n}{(-q)_n^2}-4\sum_{n=1}^\infty\dfrac{(q)_{n-1}^2q^n}{(-q)_n^2}\left(\sum_{k=1}^{n-1}\dfrac{q^k(5+6q^k+5q^{2k})}{(1-q^{2k})^2}+\dfrac{q^n}{(1+q^n)^2}\right)\notag\\
&&\hspace{2cm}=\dfrac{(q)_\infty^2}{(-q)_\infty^2}\left\{3\sum_{n=1}^\infty\dfrac{nq^n}{1-q^n}+2\sum_{n=1}^\infty\dfrac{(2n-1)q^{2n-1}}{1-q^{2n-1}}\right\}.
\end{eqnarray*}
\end{theorem}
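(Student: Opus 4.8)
\noindent\textbf{Proof proposal for Theorem~\ref{Th3}.}
The plan is to imitate the route from \eqref{watspl} and \eqref{diff2} to \eqref{new}, but now taking \emph{fourth} $z$-derivatives: differentiate the identity of Theorem~\ref{spt-ana} four times in $z$, set $z=1$, and compare the resulting values of the two sides. A preliminary simplification is convenient. Collapsing the two rational prefactors on the right of \eqref{spt-anaeqn}, and recognizing $(z^{-2}q,z^2q)_\infty/(-z^{-1}q,-zq)_\infty$ as the quintuple-product expression $D(z,q)$ of \eqref{Dzq}, turns \eqref{spt-anaeqn} into
\[
F(z):=\sum_{n\ge 0}\frac{(z^{2};q)_n(z^{-2};q)_nq^n}{(-zq;q)_n(-z^{-1}q;q)_n}=\frac{(1-z)^2}{z^2-z+1}\,D(z,q)+\frac{z}{z^2-z+1}.
\]
Write $A(z)=(1-z)^2/(z^2-z+1)$ and $B(z)=z/(z^2-z+1)$. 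Expanding at $z=1$ gives $A(1)=A'(1)=A^{(4)}(1)=0$, $A''(1)=2$, $A'''(1)=-6$ and $B'(1)=B^{(4)}(1)=0$, $B''(1)=-2$, $B'''(1)=6$, while the symmetry $D(z,q)=D(z^{-1},q)$ forces $D'(1,q)=0$. Hence Leibniz's rule leaves only the $k=2$ term of $(AD)^{(4)}$ alive at $z=1$, so $F^{(4)}(1)=12\,D''(1,q)$. By Theorem~\ref{Quintupledoublederi} together with the elementary identity $(q;q)_\infty^2(q;q^2)_\infty^2=(q)_\infty^2/(-q)_\infty^2$, this gives
\[
-\tfrac{1}{24}F^{(4)}(1)=-\tfrac12 D''(1,q)=\frac{(q)_\infty^2}{(-q)_\infty^2}\left\{3\sum_{n=1}^\infty\frac{nq^n}{1-q^n}+2\sum_{n=1}^\infty\frac{(2n-1)q^{2n-1}}{1-q^{2n-1}}\right\},
\]
the right-hand side of the theorem.

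It remains to compute $F^{(4)}(1)$ directly from the series. For $n\ge 1$ the $n$-th summand $t_n(z)$ has a double zero at $z=1$: each of $(z^2;q)_n$ and $(z^{-2};q)_n$ contributes a simple zero there and $(-zq;q)_n(-z^{-1}q;q)_n$ is nonzero at $z=1$. Writing $t_n(z)=(z-1)^2\phi_n(z)$ with $\phi_n$ holomorphic near $z=1$, Leibniz again gives $t_n^{(4)}(1)=12\,\phi_n''(1)$, so $F^{(4)}(1)=12\sum_{n\ge1}\phi_n''(1)$ (termwise differentiation is legitimate since the series for $F$ converges uniformly on a neighbourhood of $z=1$, its consecutive-term ratio tending to $q$). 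Logarithmic differentiation of
\[
\phi_n(z)=\frac{(z^2;q)_n}{z-1}\cdot\frac{(z^{-2};q)_n}{z-1}\cdot\frac{q^n}{(-zq;q)_n(-z^{-1}q;q)_n}
\]
at $z=1$ yields $\phi_n(1)=-4\,(q)_{n-1}^2 q^n/(-q)_n^2$, $(\log\phi_n)'(1)=-1$, and—after the partial-fraction consolidation $\tfrac{8}{(1-q^k)^2}+\tfrac{2}{(1+q^k)^2}=\tfrac{2(5+6q^k+5q^{2k})}{(1-q^{2k})^2}$ and separating off the $j=n$ endpoint of the sum coming from $(-zq;q)_n(-z^{-1}q;q)_n$—
\[
(\log\phi_n)''(1)=\frac32-2\left(\sum_{k=1}^{n-1}\frac{q^k(5+6q^k+5q^{2k})}{(1-q^{2k})^2}+\frac{q^n}{(1+q^n)^2}\right)=:\frac32-2C_n,
\]
where $C_n$ is exactly the inner bracket of Theorem~\ref{Th3}. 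Then $\phi_n''(1)=\phi_n(1)\big[(\log\phi_n)''(1)+((\log\phi_n)'(1))^2\big]=-4\dfrac{(q)_{n-1}^2q^n}{(-q)_n^2}\big(\tfrac52-2C_n\big)$, whence
\[
-\tfrac1{24}F^{(4)}(1)=2\sum_{n=1}^\infty\frac{(q)_{n-1}^2q^n}{(-q)_n^2}\Big(\frac52-2C_n\Big)=5\sum_{n=1}^\infty\frac{(q)_{n-1}^2q^n}{(-q)_n^2}-4\sum_{n=1}^\infty\frac{(q)_{n-1}^2q^n}{(-q)_n^2}\,C_n,
\]
which is precisely the left-hand side of Theorem~\ref{Th3}. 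Equating the two evaluations of $-\tfrac1{24}F^{(4)}(1)$ finishes the proof. (As a consistency check, taking second instead of fourth derivatives reproduces \eqref{blospt}.)

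The one genuinely delicate point is the evaluation of $(\log\phi_n)''(1)$. It requires careful logarithmic differentiation of all four Pochhammer factors of $\phi_n$ at $z=1$, keeping track of the non-cancelling constant terms that appear once $(z^{\pm2};q)_n$ is divided by $z-1$ (the factors $-(1+z)$ and $(1+z)/z^2$ contribute $-\tfrac14$ and $\tfrac74$ to the second log-derivative at $z=1$), and then the consolidation of $\tfrac{8}{(1-q^k)^2}+\tfrac{2}{(1+q^k)^2}$ into exactly $\tfrac{2(5+6q^k+5q^{2k})}{(1-q^{2k})^2}$, which is what produces the somewhat unusual shape of the summand in the statement; the isolated term $q^n/(1+q^n)^2$ likewise drops out of the $j=n$ endpoint of the $(-zq;q)_n(-z^{-1}q;q)_n$ factor. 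This is the analogue, in the quintuple-product setting, of the bookkeeping Andrews carries out in passing from \eqref{watspl} to \eqref{new}; everything else is routine.
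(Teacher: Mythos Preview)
Your proof is correct and follows essentially the same route as the paper: both take the fourth $z$-derivative of \eqref{spt-anaeqn} at $z=1$, reduce the right-hand side to $-\tfrac12 D''(1,q)$ and then invoke Theorem~\ref{Quintupledoublederi}, and handle the left-hand side by the same logarithmic differentiation of the Pochhammer factors (your $C'(1)$ computation is identical to the paper's \eqref{Cderival}). The only cosmetic difference is that you factor $(z-1)^2$ out of each summand and apply Leibniz together with the explicit Taylor data of $A(z),B(z)$, whereas the paper factors $(1-z)(1-z^{-1})$ globally and appeals to Lemma~\ref{diffk}; the resulting bookkeeping is equivalent.
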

Using a result of Shimura, the double series in Theorem \ref{Th3} can be expressed in terms of a linear combination of modular forms involving a unary theta series of weight $7/2$. 
\begin{theorem}\label{spt-mod}
We have
\begin{eqnarray*}
4\sum_{n=1}^\infty\dfrac{(q)_{n-1}^2q^n}{(-q)_n^2}\left(\sum_{k=1}^{n-1}\dfrac{q^k(5+6q^k+5q^{2k})}{(1-q^{2k})^2}+\dfrac{q^n}{(1+q^n)^2}\right)
=\dfrac{5}{4}-\dfrac{31}{24}\cdot\dfrac{\eta(\tau)^4}{\eta(2\tau)^2}+\dfrac{1}{24}\cdot\dfrac{\theta(\tau)}{\eta(\tau)}
\end{eqnarray*}
where
\begin{equation*}
\theta(\tau):=\sum_{n=1}^\infty\left(\dfrac{n}{12}\right)n^3q^{\frac{n^2}{24}},\hspace{1cm}\left(\dfrac{n}{12}\right):=\begin{cases}
1,&n\equiv 1\pmod{6}\notag\\
-1,&n\equiv 5\pmod{6}\notag\\
0,&\emph{otherwise}
\end{cases}
\end{equation*}
and $\eta(\tau)=q^{\frac{1}{24}}(q;q)_\infty$ is the Dedekind eta-function. Also, $\theta(2\tau)$ satisfies
\begin{equation}\label{thetatrans}
\theta\left(2\cdot\dfrac{a\tau+b}{c\tau+d}\right)=e^{\frac{i\pi ab}{6}}\left(\dfrac{3c}{d}\right)\varepsilon_d^{-1}(c\tau+d)^{7/2}\theta(2\tau),\;\begin{pmatrix}
a&b\\c&d
\end{pmatrix}\in\Gamma_1(12).
\end{equation}
Here $\left(\dfrac{\cdot}{\cdot}\right)$ is the extended Jacobi symbol and $\varepsilon_d=1$ or $i$ according as $d\equiv 1$ or $3\;\pmod{4}$ and $\Gamma_1(N)$ is a congruence subgroup of $SL_2(\mathbb{Z})$ consisting of all $2\times 2$ matrices with diagonal entries $\equiv 1\pmod{N}$ and the lower left-entry $\equiv 0\pmod{N}$.
\end{theorem}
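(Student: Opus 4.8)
The plan is to derive the first identity by assembling Theorem~\ref{Th3}, the evaluation \eqref{blospt}, Theorem~\ref{Quintupledoublederi}, and a fresh closed form for $\bigl[\tfrac{d^2}{dz^2}D(z,q)\bigr]_{z=1}$ extracted from the quintuple product identity, and to obtain \eqref{thetatrans} as an application of Shimura's transformation law for theta series with character. Throughout write $\Phi:=3\sum_{n=1}^\infty\frac{nq^n}{1-q^n}+2\sum_{n=1}^\infty\frac{(2n-1)q^{2n-1}}{1-q^{2n-1}}$ and let $S$ denote the left-hand side of the theorem. Using $(q;q^2)_\infty=(q;q)_\infty/(q^2;q^2)_\infty$ one has $\frac{(q)_\infty^2}{(-q)_\infty^2}=(q;q)_\infty^2(q;q^2)_\infty^2=\frac{(q)_\infty^4}{(q^2;q^2)_\infty^2}=\frac{\eta(\tau)^4}{\eta(2\tau)^2}$, so \eqref{blospt} reads $\sum_{n\ge1}\frac{(q)_{n-1}^2q^n}{(-q)_n^2}=\tfrac14\bigl(1-\frac{\eta(\tau)^4}{\eta(2\tau)^2}\bigr)$, and Theorem~\ref{Th3} rearranges into
\begin{equation*}
S=\frac54-\frac54\cdot\frac{\eta(\tau)^4}{\eta(2\tau)^2}-\frac{\eta(\tau)^4}{\eta(2\tau)^2}\,\Phi .
\end{equation*}
So everything comes down to expressing $\frac{\eta(\tau)^4}{\eta(2\tau)^2}\,\Phi$ through $\frac{\eta(\tau)^4}{\eta(2\tau)^2}$ and $\frac{\theta(\tau)}{\eta(\tau)}$.

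To do this I would expand $D(z,q)$ near $z=1$ using the quintuple product identity \eqref{QPI1}, which, with $D(z,q)$ as in \eqref{Dzq}, is equivalent to $(1-z)(q;q)_\infty D(z,q)=g(z)$, where $g(z):=\sum_{n\in\mathbb{Z}}q^{n(3n-1)/2}(z^{3n}-z^{1-3n})$ and $g(1)=0$. Putting $h(z):=(q;q)_\infty D(z,q)$, so that $g=(1-z)h$, differentiation yields $g'(1)=-h(1)$, $g'''(1)=-3h''(1)$, whence
\begin{equation*}
\Bigl[\tfrac{d^2}{dz^2}D(z,q)\Bigr]_{z=1}=\frac{h''(1)}{(q;q)_\infty}=-\frac{g'''(1)}{3(q;q)_\infty}.
\end{equation*}
Differentiating $g$ termwise and reindexing $n\mapsto-n$ (so $q^{n(3n-1)/2}$ becomes $q^{\omega_n}$ with $\omega_n:=\tfrac{3n^2+n}{2}$) gives $g'(1)=-\sum_{n}(6n+1)q^{\omega_n}$ and $g'''(1)=-3\sum_{n}(18n^3+9n^2+n)q^{\omega_n}$. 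Since $g'(1)=-h(1)=-(q;q)_\infty D(1,q)=-(q;q)_\infty^3(q;q^2)_\infty^2$, we get $\sum_{n}(6n+1)q^{\omega_n}=(q;q)_\infty\frac{\eta(\tau)^4}{\eta(2\tau)^2}$. The nonzero terms of $\theta(\tau)$ occur at $n\equiv\pm1\pmod6$ and satisfy $(6m\pm1)^2/24=\omega_{\pm m}+\tfrac1{24}$, so $\theta(\tau)=q^{1/24}\sum_{n\in\mathbb{Z}}(6n+1)^3q^{\omega_n}$; feeding in the elementary identity $(6n+1)^3=12(18n^3+9n^2+n)+(6n+1)$ then gives $q^{-1/24}\theta(\tau)=-4g'''(1)+(q;q)_\infty\frac{\eta(\tau)^4}{\eta(2\tau)^2}$. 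Solving for $g'''(1)$ and substituting (with $q^{-1/24}\theta(\tau)=(q;q)_\infty\theta(\tau)/\eta(\tau)$) produces
\begin{equation*}
\Bigl[\tfrac{d^2}{dz^2}D(z,q)\Bigr]_{z=1}=-\frac1{12}\Bigl(\frac{\eta(\tau)^4}{\eta(2\tau)^2}-\frac{\theta(\tau)}{\eta(\tau)}\Bigr).
\end{equation*}

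Comparing this with Theorem~\ref{Quintupledoublederi}, where the left side equals $-2(q;q)_\infty^2(q;q^2)_\infty^2\,\Phi=-2\frac{\eta(\tau)^4}{\eta(2\tau)^2}\,\Phi$, gives $\frac{\eta(\tau)^4}{\eta(2\tau)^2}\,\Phi=\frac1{24}\bigl(\frac{\eta(\tau)^4}{\eta(2\tau)^2}-\frac{\theta(\tau)}{\eta(\tau)}\bigr)$; substituting this into the displayed formula for $S$ and using $\tfrac54+\tfrac1{24}=\tfrac{31}{24}$ gives the first assertion. In this part the only place care is needed is the bookkeeping among $q^{n(3n-1)/2}$, the pentagonal exponent $\omega_n$, and $q^{n^2/24}$; everything else is routine.

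For \eqref{thetatrans}, note that $\bigl(\tfrac{n}{12}\bigr)n^3$ is even in $n$, so $\theta(2\tau)=\tfrac12\sum_{n\in\mathbb{Z}}\bigl(\tfrac{n}{12}\bigr)n^3q^{n^2/12}$ is a theta series of weight $\tfrac12+3=\tfrac72$ attached to the odd Dirichlet character $\bigl(\tfrac{\cdot}{12}\bigr)$ and the monomial $x\mapsto x^3$. One then invokes Shimura's transformation formula for theta series of this shape: (i) since the exponent $n^2/12$ is not integral, $\tau\mapsto\tau+1$ contributes the factor $e^{i\pi ab/6}$; (ii) the Fricke-type involution is treated by Poisson summation, bringing in a Gauss sum $\sum_{n\bmod12}\bigl(\tfrac{n}{12}\bigr)n^3e^{2\pi in^2/12}$ together with the usual half-integral-weight theta multiplier; (iii) assembling these over a generating set of $\Gamma_1(12)$, and using that $d\equiv1\pmod{12}$ forces $\varepsilon_d=1$ and trivializes the character value at $d$, leaves $\bigl(\tfrac{3c}{d}\bigr)$ as the residual multiplier. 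I expect the determination of this precise automorphy factor $e^{i\pi ab/6}\bigl(\tfrac{3c}{d}\bigr)\varepsilon_d^{-1}(c\tau+d)^{7/2}$ to be the main obstacle, as it rests on a careful evaluation of the relevant Gauss sums and on tracking the sign conventions in Shimura's formula; by contrast the $q$-series manipulations above are mechanical once the quintuple-product expansion of $D(z,q)$ is in hand.
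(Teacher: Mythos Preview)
Your proof of the $q$-series identity is correct, and the route is genuinely different from the paper's. The paper rewrites $\Phi=5\sum_{n\ge1}\frac{nq^n}{1-q^n}-4\sum_{n\ge1}\frac{nq^{2n}}{1-q^{2n}}$, recognizes $\frac{\eta(\tau)^4}{\eta(2\tau)^2}\Phi$ as $-\frac{q}{(q)_\infty}\frac{d}{dq}\bigl(\frac{(q)_\infty^5}{(q^2;q^2)_\infty^2}\bigr)$, and then invokes Lemke~Oliver's eta-quotient identity $\frac{\eta(\tau)^5}{\eta(2\tau)^2}=\sum_{n\ge1}\bigl(\frac{n}{12}\bigr)nq^{n^2/24}$ before differentiating once in $q$. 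You instead take a third $z$-derivative of the quintuple product at $z=1$ to express $D''(1,q)$ directly as a constant times $\sum_n(18n^3+9n^2+n)q^{\omega_n}$, and then use the algebraic decomposition $(6n+1)^3=12(18n^3+9n^2+n)+(6n+1)$ together with $\sum_n(6n+1)q^{\omega_n}=(q)_\infty^3(q;q^2)_\infty^2$, which you recover from $g'(1)=-h(1)$. Your approach is more self-contained---it does not need the external eta-quotient input and makes the emergence of $n^3$ in $\theta(\tau)$ transparent---while the paper's is slightly shorter once that input is quoted. The bookkeeping you flagged (between $q^{n(3n-1)/2}$, $\omega_n$, and $n^2/24$) checks out.

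For \eqref{thetatrans} the paper simply specializes Shimura's Proposition~2.1 with $N=6$, $h=1$, $P(m)=m^3$, $A=[6]$, whereas you outline the generators-plus-Poisson mechanism behind that proposition. The outline is standard and correct in spirit, but since the transformation law lies squarely inside Shimura's general theorem, the cleanest way to close is to cite it with those parameters rather than to re-derive the multiplier system by hand.
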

\section{Notations and preliminaries}\label{NP}
In addition to \eqref{poch}, we adopt the following notations:
\begin{align}
(\alpha_1,\alpha_2,\alpha_3,\cdots,\alpha_k;q)_n=\prod_{i=1}^k(\alpha_i;q)_n,\hspace{0.5cm}
(\alpha_1,\alpha_2,\alpha_3,\cdots,\alpha_k;q)_\infty=\lim_{n\rightarrow\infty}(\alpha_1,\alpha_2,\alpha_3,\cdots,\alpha_k;q)_n.
\end{align}
We also require the following \textit{unilateral basic hypergeometric series}:
\begin{eqnarray}
_{{k+1}}\phi _{k}\left({\begin{matrix}a_{1}&a_{2}&\ldots &a_{{k}}&a_{{k+1}}\\\mbox{}&b_{1}&b_{2}&\ldots &b_{{k}}\end{matrix}};q,z\right)=\sum _{{n=0}}^{\infty }{\frac  {(a_{1},a_{2},\ldots ,a_{{k+1}};q)_{n}}{(b_{1},b_{2},\ldots ,b_{k},q;q)_{n}}}z^{n}.
\end{eqnarray}
The Chebyshev polynomials of the second kind $U_n(x)$ are defined by the recurrence relation \cite[p.~9, Equation (1.2.15 (a)-(b))]{Rivlin}
\begin{eqnarray}\label{Cherec}
U_0(x)=1,\hspace{1cm}U_1(x)=2x,\hspace{1cm}U_{n+1}(x)=2x\;U_n(x)-U_{n-1}(x).
\end{eqnarray}
The ordinary generating function of $U_n(x)$ is \cite[p.~155, Equation (6.45) with $\gamma=1$]{Temme}
\begin{eqnarray}\label{Chesec}
\sum_{n=0}^\infty U_n(x)t^n=\dfrac{1}{1-2tx+t^2}.
\end{eqnarray}
Also we have \cite[p.~7, Equation (1.23)]{Rivlin}
\begin{eqnarray}\label{Unexprr}
U_n(x)=\dfrac{1}{n+1}T'_{n+1}(x)=\dfrac{\sin((n+1)\cos^{-1}(x))}{\sin(\cos^{-1}(x))}
\end{eqnarray}
where $T_k(x)$ is a Chebyshev polynomial of the first kind with the generating function \cite[p.~36, Equation (1.105)]{Rivlin}
\begin{equation*}
\sum_{n=0}^\infty T_n(x)t^n=\dfrac{1-tx}{1-2tx+t^2}.    
\end{equation*}
From \eqref{Chesec}, it follows that for $n>1$ 
\begin{eqnarray}\label{Chebynega}
U_{-n}(x)=-U_{n-2}(x),\hspace{2cm}U_{-1}(x)=0.
\end{eqnarray}
Next, we require a result due to Agarwal \cite[Equation (3.1)]{Aga}. 
\begin{theorem}\label{Aga-id}
We have
\begin{eqnarray}
\sum_{n=0}^\infty\dfrac{(\alpha)_n(\gamma)_n}{(\beta)_n(\delta)_n}t^n&=&\dfrac{(q/(\alpha t),\gamma,\alpha t,\beta/\alpha,q;q)_\infty}{(\beta/(\alpha t), \delta, t, q/\alpha,\beta;q)_\infty}\;{}_2\phi_1\left(\begin{matrix}
\delta/\gamma,&t\\
&q\alpha t/\beta
\end{matrix}
;q,\;\gamma q/\beta\right)\notag\\&+&\;\dfrac{(\gamma)_\infty}{(\delta)_\infty}\left(1-\dfrac{q}{\beta}\right)\sum_{m=0}^\infty\dfrac{(\delta/\gamma)_m(t)_m}{(q)_m(\alpha t/\beta)_{m+1}}(q\gamma/\beta)^m\left({}_2\phi_1\left(\begin{matrix}
q,&q/t\\
&q\beta/(\alpha t)
\end{matrix}
;q,\;q/\alpha\right)-1\right)\notag\\
&+&\;\dfrac{(\gamma)_\infty}{(\delta)_\infty}\left(1-\dfrac{q}{\beta}\right)\sum_{p=0}^\infty\dfrac{\gamma^p(\delta/\gamma)_p}{(q)_p}\sum_{m=0}^\infty\dfrac{(\delta q^p/\gamma)_m(tq^p)_m}{(q^{1+p})_m(\alpha tq^p/\beta)_{m+1}}(q\gamma/\beta)^m.
\end{eqnarray}
\end{theorem}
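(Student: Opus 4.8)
The plan: since this identity is attributed to Agarwal \cite[Equation (3.1)]{Aga}, the pragmatic route is to quote it directly; I will instead outline how one would prove it ab initio and where the real work sits. Written as a basic hypergeometric series the left-hand side is ${}_3\phi_2(\alpha,\gamma,q;\beta,\delta;q,t)$, and it is precisely the numerator parameter $q$ that makes the classical two- and three-term transformations of ${}_2\phi_1$ (and Sears' transformation of ${}_3\phi_2$) degenerate when applied directly, so the formula must be produced by hand. The first move is to decouple the data $\gamma,\delta$: using $\frac{(\gamma;q)_n}{(\delta;q)_n}=\frac{(\gamma;q)_\infty}{(\delta;q)_\infty}\cdot\frac{(\delta q^n;q)_\infty}{(\gamma q^n;q)_\infty}$ followed by the $q$-binomial theorem $\frac{(\delta q^n;q)_\infty}{(\gamma q^n;q)_\infty}=\sum_{m\ge 0}\frac{(\delta/\gamma;q)_m}{(q;q)_m}(\gamma q^n)^m$ and interchanging the two sums (valid for $|\gamma|$ small, then extended by analytic continuation in $\gamma$), the left-hand side becomes $\frac{(\gamma;q)_\infty}{(\delta;q)_\infty}\sum_{m\ge 0}\frac{(\delta/\gamma;q)_m}{(q;q)_m}\gamma^m\,S(tq^m)$, where $S(w):=\sum_{n\ge 0}\frac{(\alpha;q)_n}{(\beta;q)_n}w^n$. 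This already explains the recurring blocks $\frac{(\gamma;q)_\infty}{(\delta;q)_\infty}$, $(\delta/\gamma;q)_m$ and $\gamma^m$ visible on the right-hand side.

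The second move is to transform $S(w)={}_2\phi_1(q,\alpha;\beta;q,w)$ by Heine's first transformation; the telescoping $\frac{(w;q)_k}{(qw;q)_k}=\frac{1-w}{1-wq^k}$ collapses the transformed series to the partial-fraction form $S(w)=\frac{(\alpha;q)_\infty}{(\beta;q)_\infty}\sum_{k\ge 0}\frac{(\beta/\alpha;q)_k}{(q;q)_k}\,\frac{\alpha^k}{1-wq^k}$, which explains the block $\frac{(\beta/\alpha;q)_\infty}{(\beta;q)_\infty}$ and the parameter $\beta/\alpha$. Substituting $w=tq^m$ renders the left-hand side as $\frac{(\gamma;q)_\infty(\alpha;q)_\infty}{(\delta;q)_\infty(\beta;q)_\infty}$ times the elementary double series $\sum_{m,k\ge 0}\frac{(\delta/\gamma;q)_m(\beta/\alpha;q)_k}{(q;q)_m(q;q)_k}\,\frac{\gamma^m\alpha^k}{1-tq^{m+k}}$.

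The third and hardest step is to reorganize this double series into the three summands on the right. A naive geometric expansion of $\frac{1}{1-tq^{m+k}}$ merely reverses the previous two steps, so one must instead isolate the contribution of the small combined indices from that of the large ones and re-sum each block separately; re-summing the ``large-index'' block by the $q$-binomial theorem yields the first term, namely the theta-quotient prefactor times ${}_2\phi_1(\delta/\gamma,t;q\alpha t/\beta;q,\gamma q/\beta)$, while the ``small-index'' block, after a further application of the $q$-binomial theorem and of the nonterminating analogues of the $q$-Vandermonde/$q$-Gauss summations, yields the two correction double series of the second and third terms, the inner ${}_2\phi_1(q,q/t;q\beta/(\alpha t);q,q/\alpha)$ and the subtracted $1$ being exactly the correction term characteristic of such nonterminating $q$-summations. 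I expect this last reorganization to be the main obstacle: one has to find the split for which each block re-sums cleanly, carry the combinatorial bookkeeping so that the correction terms assemble into precisely the stated closed forms, and justify every interchange of summation --- the latter being what forces the exceptional parameter configurations (no vanishing denominators, convergent series) in the applications that follow. In view of this, quoting \cite[Equation (3.1)]{Aga} directly is the sensible choice here.
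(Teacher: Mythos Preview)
The paper does not prove this statement at all: Theorem~\ref{Aga-id} is simply quoted from Agarwal \cite[Equation (3.1)]{Aga} as a known result needed in the proof of Theorem~\ref{spt-general}. You correctly identify this and arrive at the same conclusion, namely that citing \cite{Aga} is the sensible route.

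Your sketch of an ab initio argument is a reasonable roadmap and the first two moves are sound. The reduction via the $q$-binomial expansion of $(\gamma)_n/(\delta)_n$ is standard, and your manipulation of $S(w)={}_2\phi_1(q,\alpha;\beta;q,w)$ via Heine's first transformation is correct: the telescoping $(w)_k/(qw)_k=(1-w)/(1-wq^k)$ does collapse the transformed series to the partial-fraction form you state. The third step, however, is only gestured at. You describe splitting the double series into ``large-index'' and ``small-index'' blocks and re-summing each, but you do not specify the split, and it is precisely this bookkeeping that produces the three distinct summands on the right (the theta-quotient times a ${}_2\phi_1$, the correction sum involving ${}_2\phi_1(q,q/t;q\beta/(\alpha t);q,q/\alpha)-1$, and the double sum over $p,m$). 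Without carrying out that reorganization explicitly one cannot claim a proof; your own assessment that this is ``the main obstacle'' is accurate. Since the paper itself offers no proof to compare against, there is nothing further to adjudicate: your proposal and the paper agree on the only point that matters here, which is to cite Agarwal.
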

The next result generalizes an identity of Andrews \cite{gea266} in the case $k=2$. It is required later in our proofs. 
\begin{lemma}\label{diffk}
For any $C^{\infty}$-function $f$ and $2\leq k\in\mathbb{N}$, we have
\begin{eqnarray}
-\dfrac{(-1)^k}{k!}\Bigg[\dfrac{d^k}{dz^k}(1-z)(1-z^{-1})f(z)\Bigg]_{z=1}=\sum_{\ell=2}^k\dfrac{(-1)^\ell}{(\ell-2)!}\Bigg[\dfrac{d^{\ell-2}}{dz^{\ell-2}}f(z)\Bigg]_{z=1}.
\end{eqnarray}
\end{lemma}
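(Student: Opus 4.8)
The plan is to reduce the claim to a single application of Leibniz's rule, after recording the Taylor expansion of $(1-z)(1-z^{-1})$ at $z=1$. First I would use the factorization $(1-z)(1-z^{-1}) = -(z-1)^2/z$, so that with $w := z-1$ one has, near $w=0$,
\[
(1-z)(1-z^{-1}) = -\frac{w^2}{1+w} = \sum_{j\ge 0}(-1)^{j+1}w^{j+2}.
\]
For a $C^\infty$ function $h$ defined near $z=1$ write $T_i(h):=\frac{1}{i!}\bigl[\frac{d^i}{dz^i}h(z)\bigr]_{z=1}$ for its $i$-th Taylor coefficient at $1$. The display above gives $T_0\bigl((1-z)(1-z^{-1})\bigr)=T_1\bigl((1-z)(1-z^{-1})\bigr)=0$ and $T_i\bigl((1-z)(1-z^{-1})\bigr)=(-1)^{i-1}$ for $i\ge 2$ (one can equally well read these off from $g(1)=0$, $g'(1)=0$, $g''(1)=-2$, where $g(z):=(1-z)(1-z^{-1})=2-z-z^{-1}$).

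Next I would apply Leibniz's rule $\bigl[\frac{d^k}{dz^k}(g f)\bigr]_{z=1}=\sum_{i=0}^k\binom{k}{i}g^{(i)}(1)f^{(k-i)}(1)$, valid since $g$ and $f$ are both $C^\infty$ near $z=1$; dividing by $k!$ yields $T_k\bigl((1-z)(1-z^{-1})f\bigr)=\sum_{i=2}^k(-1)^{i-1}T_{k-i}(f)$. Hence
\[
-\frac{(-1)^k}{k!}\Bigl[\frac{d^k}{dz^k}(1-z)(1-z^{-1})f(z)\Bigr]_{z=1}
= -(-1)^k\,T_k\bigl((1-z)(1-z^{-1})f\bigr)
= (-1)^k\sum_{i=2}^k(-1)^i\,T_{k-i}(f).
\]
Finally I would substitute $\ell=k-i+2$, so that $T_{k-i}(f)=T_{\ell-2}(f)=\frac{1}{(\ell-2)!}\bigl[\frac{d^{\ell-2}}{dz^{\ell-2}}f(z)\bigr]_{z=1}$ and $(-1)^i=(-1)^{k-\ell+2}=(-1)^k(-1)^\ell$; as $i$ runs from $2$ to $k$, $\ell$ runs from $k$ down to $2$, and the last sum collapses to $\sum_{\ell=2}^k\frac{(-1)^\ell}{(\ell-2)!}\bigl[\frac{d^{\ell-2}}{dz^{\ell-2}}f(z)\bigr]_{z=1}$, which is exactly the right-hand side of the lemma.

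There is no real obstacle: the only care needed is in the sign bookkeeping and the index shift $i\leftrightarrow\ell$. As a sanity check, for $k=2$ the identity reads $-\frac12\bigl[\frac{d^2}{dz^2}(1-z)(1-z^{-1})f(z)\bigr]_{z=1}=f(1)$, which follows at once from $(gf)''(1)=g''(1)f(1)=-2f(1)$; this is the case of the identity of Andrews that Lemma~\ref{diffk} generalizes.
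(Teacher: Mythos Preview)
Your proof is correct and follows essentially the same route the paper indicates (``Taylor's expansion and successive differentiation''): you compute the Taylor coefficients of $g(z)=(1-z)(1-z^{-1})$ at $z=1$ and then apply the Leibniz rule, which is precisely the intended argument, just written out in full detail. The sign and index bookkeeping is accurate, and your $k=2$ sanity check matches Andrews's identity.
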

\begin{proof}
The proof follows by Taylor's expansion and successive differentiation.
\end{proof}
\begin{lemma}\label{parlemma}
We have
\begin{eqnarray*}
\dfrac{(-q;q)_\infty}{(q;q)_\infty^2}=:\sum_{n=0}^\infty a(n)q^n
\end{eqnarray*}
where $a(n)$ is defined in \eqref{an}.
\end{lemma}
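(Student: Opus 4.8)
The plan is to realize $\dfrac{(-q;q)_\infty}{(q;q)_\infty^2}$ as the image, under the substitution $q^2\mapsto q$, of a Cauchy product of two copies of Euler's partition generating function \eqref{gfpar}, one of them evaluated at $-q$. First I would record the two immediate consequences of \eqref{gfpar}: that $\dfrac{1}{(q;q)_\infty}=\sum_{n\geq 0}p(n)q^n$ and, replacing $q$ by $-q$, that $\dfrac{1}{(-q;-q)_\infty}=\sum_{n\geq 0}(-1)^np(n)q^n$. Multiplying these two power series gives
\[
\frac{1}{(q;q)_\infty(-q;-q)_\infty}=\sum_{N\geq 0}\Bigg(\sum_{k=0}^{N}(-1)^kp(k)p(N-k)\Bigg)q^N,
\]
so that $a(n)$ of \eqref{an} is precisely the coefficient of $q^{2n}$ on the left-hand side.

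Next I would simplify that left-hand side using the standard even/odd factorizations $(q;q)_\infty=(q;q^2)_\infty(q^2;q^2)_\infty$ and $(-q;-q)_\infty=(-q;q^2)_\infty(q^2;q^2)_\infty$, together with $(q;q^2)_\infty(-q;q^2)_\infty=(q^2;q^4)_\infty$, to obtain
\[
\frac{1}{(q;q)_\infty(-q;-q)_\infty}=\frac{1}{(q^2;q^2)_\infty^2(q^2;q^4)_\infty}.
\]
The key observation is that the right-hand side is a power series in $q^2$; writing it as $\widetilde G(q^2)$ with $\widetilde G(q):=\big((q;q)_\infty^2(q;q^2)_\infty\big)^{-1}$, a comparison of coefficients forces $\sum_{k=0}^{N}(-1)^kp(k)p(N-k)=0$ for odd $N$ and yields $a(n)=[q^n]\,\widetilde G(q)$ for every $n\geq 0$.

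Finally, I would identify $\widetilde G$ with the asserted eta-quotient: from $(q;q)_\infty(-q;q)_\infty=(q^2;q^2)_\infty$ and $(q;q)_\infty=(q;q^2)_\infty(q^2;q^2)_\infty$ one gets $(q;q^2)_\infty=\big((-q;q)_\infty\big)^{-1}$, hence $\widetilde G(q)=\dfrac{(-q;q)_\infty}{(q;q)_\infty^2}$, which is the claim. There is no genuine obstacle here — every step is a routine identity among $q$-Pochhammer symbols; the only thing worth noticing is that the Cauchy product displayed above is an even function of $q$, which is exactly why the definition \eqref{an} of $a(n)$ involves only the diagonal term $q^{2n}$. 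Before finalizing I would simply double-check the three elementary product identities invoked, namely $(-q;-q)_\infty=(-q;q^2)_\infty(q^2;q^2)_\infty$, $(q;q^2)_\infty(-q;q^2)_\infty=(q^2;q^4)_\infty$, and $(-q;q)_\infty(q;q^2)_\infty=1$, all of which follow by regrouping or telescoping the defining products.
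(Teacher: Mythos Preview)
Your argument is correct and is essentially the same as the paper's: both multiply $\dfrac{1}{(q;q)_\infty}$ by $\dfrac{1}{(-q;-q)_\infty}$, recognize the product as a power series in $q^2$ via the same elementary $q$-Pochhammer manipulations, and read off the $q^{2n}$-coefficient as $a(n)$. The only cosmetic difference is that the paper simplifies the product to $\dfrac{(q^4;q^4)_\infty}{(q^2;q^2)_\infty^3}$ whereas you stop at the equivalent form $\dfrac{1}{(q^2;q^2)_\infty^2(q^2;q^4)_\infty}$, and you spell out the final identification $\widetilde G(q)=\dfrac{(-q;q)_\infty}{(q;q)_\infty^2}$ a bit more explicitly.
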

\begin{proof}
Observe that
\begin{eqnarray}\label{eq1}
\dfrac{1}{(q;q)_\infty}\prod_{n=1}^\infty\dfrac{1}{1-(-q)^n}=\dfrac{1}{(q;q)_\infty(q^2;q^2)_\infty(-q;q^2)_\infty}=\dfrac{(q^4;q^4)_\infty}{(q^2;q^2)_\infty^3}=:\sum_{n\geq 0}a(n)q^{2n},
\end{eqnarray}
say. On the other hand,
\begin{align}\label{eq2}
\dfrac{1}{(q;q)_\infty}\prod_{n=1}^\infty\dfrac{1}{1-(-q)^n}=\left(\sum_{\ell=0}^\infty p(\ell)q^\ell\right)\left(\sum_{k=0}^\infty (-1)^k p(k)q^k\right)=\sum_{n\geq 0}\left(\sum_{k=0}^n(-1)^kp(k)p(n-k)\right)q^n.
\end{align}
Comparing coefficients of $q^{2n}$ on both sides of \eqref{eq1} and \eqref{eq2} yields the result. Additionally, we obtain
\begin{eqnarray}
\sum_{k=0}^{2n+1}(-1)^kp(k)p(2n+1-k)=0,
\end{eqnarray}
which also follows immediately by rearranging the sum. 
\end{proof}
\section{Proofs of the main results}
\subsection{Proof of Theorem \ref{spt-general}}
We first prove the result for $|\gamma|<\textup{min}\left(1,|\beta/q|\right)$. Let $t=q$ and $\delta=\alpha\gamma q^2/\beta$ in \eqref{Aga-id} to obtain

\begin{align}\label{nid1}
\sum_{n=0}^\infty\dfrac{(\alpha,\gamma)_n}{(\beta,\alpha\gamma q^2/\beta)_n}q^n&=\dfrac{(\alpha^{-1},\gamma,\alpha q,\alpha^{-1}\beta,q)_\infty}{(\alpha^{-1}q^{-1}\beta,\alpha\beta^{-1}\gamma q^2,q,\alpha^{-1}q,\beta)_\infty}\sum_{k=0}^\infty\left(\dfrac{\gamma q}{\beta}\right)^k\notag\\&\quad+\dfrac{(\gamma)_\infty}{(\alpha\beta^{-1}\gamma q^2)_\infty}\left(1-\dfrac{q}{\beta}\right)\sum_{p=0}^\infty\dfrac{\gamma^p(\alpha q^2/\beta)_p}{(q)_p}\sum_{m=0}^\infty\dfrac{(\alpha q^{p+2}/\beta)_m}{(\alpha q^{p+1}/\beta)_{m+1}}\left(\dfrac{\gamma q}{\beta}\right)^m\notag\\
&=\dfrac{(1-\alpha^{-1})}{(1-\alpha^{-1}\beta q^{-1})(1-\beta^{-1}\gamma q)}\cdot \dfrac{(\alpha q,\gamma)_\infty}{(\beta,\alpha\gamma q^2/\beta)_\infty}\notag\\&\quad+\dfrac{(\gamma)_\infty}{(\alpha\beta^{-1}\gamma q^2)_\infty}\left(1-\dfrac{q}{\beta}\right)\sum_{p=0}^\infty\dfrac{\gamma^p(\alpha q^2/\beta)_p}{(1-\alpha q^{p+1}/\beta)(q)_p}\sum_{m=0}^\infty\left(\dfrac{\gamma q}{\beta}\right)^m\notag\\
&=\dfrac{\beta^{-1}q}{(1-\alpha\beta^{-1} q)(1-\beta^{-1}\gamma q)}\cdot \dfrac{(\alpha,\gamma)_\infty}{(\beta,\alpha\gamma q^2/\beta)_\infty}\notag\\
&\quad+\dfrac{(\gamma)_\infty}{(\alpha\beta^{-1}\gamma q^2)_\infty}\cdot\dfrac{(1-\beta^{-1}q)}{(1-\beta^{-1}\gamma q)}\sum_{p=0}^\infty\dfrac{\gamma^p(\alpha q^2/\beta)_p}{(1-\alpha q^{p+1}/\beta)(q)_p},
\end{align}
where in the penultimate step we used the condition $|\gamma q/\beta|<1$.

Next, we notice that the sum in the right-hand side of \eqref{nid1} can be rewritten as

\begin{eqnarray}\label{nid2}
\sum_{p=0}^\infty\dfrac{\gamma^p(\alpha q^2/\beta)_p}{(1-\alpha q^{p+1}/\beta)(q)_p}&=&\dfrac{1}{(1-\alpha\beta^{-1}q)}+\sum_{p=1}^\infty\dfrac{(\alpha q^2/\beta)_{p-1}}{(q)_p}\gamma^p\notag\\
&=&\dfrac{1}{(1-\alpha\beta^{-1}q)}+\dfrac{1}{(1-\alpha\beta^{-1}q)}\sum_{p=1}^\infty\dfrac{(\alpha q/\beta)_{p}}{(q)_p}\gamma^p\notag\\
&=&\dfrac{1}{(1-\alpha\beta^{-1}q)}\sum_{p=0}^\infty\dfrac{(\alpha q/\beta)_{p}}{(q)_p}\gamma^p\notag\\
&=&\dfrac{1}{(1-\alpha\beta^{-1}q)}\cdot\dfrac{(\alpha\gamma q/\beta)_\infty}{(\gamma)_\infty}
\end{eqnarray}
where the last step follows by $q$-binomial theorem \cite[p.~8, Theorem 1.3.1]{spirit} since $|\gamma|<1$. Identity \eqref{mainid} now follows for $|\gamma|<\textup{min}\left(1,|\beta/q|\right)$ from \eqref{nid1} and \eqref{nid2}. By analytic continuation, the result is easily seen to be extended to the said values in the hypotheses.
\qed
\subsection{Proofs of \eqref{chanmao1} and \eqref{chanmao2}}
Equation \eqref{chanmao1} readily follows from Theorem \ref{spt-general} by letting $\alpha=x=\gamma^{-1}$ and $\beta=zq$. Similarly, letting $\alpha=x, \gamma=q/x$ and $\beta=zq$ results in \eqref{chanmao2}.
\qed
\subsection{Proof of Theorem \ref{spt-ana}}

Theorem \ref{spt-ana} follows from \eqref{chanmao1} by first replacing $z$ by $-z$ and then letting $x=z^2$.
\qed
\subsection{Proof of Theorem \ref{Quintupledoublederi}}
The quintuple product identity is given by \cite[p.~18, Theorem 1.3.17]{spirit}
\begin{eqnarray}\label{QPI}
\sum_{n=-\infty}^\infty q^{3n^2+n}\left(z^{3n}q^{-3n}-z^{-3n-1}q^{3n+1}\right)=(q^2;q^2)_\infty(qz;q^2)_\infty(q/z;q^2)_\infty(z^2;q^4)_\infty(q^4/z^2;q^4)_\infty.
\end{eqnarray}
Replacing $q$ by $\sqrt{q}$ and then $z$ by $z\sqrt{q}$ in \eqref{QPI} gives
\begin{eqnarray}\label{QPI1}
\sum_{n=-\infty}^\infty q^{(3n^2+n)/2}\left(z^{3n}-z^{-3n-1}\right)=(q;q)_\infty(zq;q)_\infty(z^{-1};q)_\infty(z^2q;q^2)_\infty(z^{-2}q;q^2)_\infty.
\end{eqnarray}
Hence
\begin{align}\label{2ndderiQPI}
&\left[\dfrac{d^2}{dz^2}(zq;q)_\infty(z^{-1}q;q)_\infty(z^2q;q^2)_\infty(z^{-2}q;q^2)_\infty\right]_{z=1}=\dfrac{1}{(q;q)_\infty}\left[\dfrac{d^2}{dz^2}\dfrac{\sum_{n=-\infty}^\infty q^{(3n^2+n)/2}(z^{3n}-z^{-3n-1})}{1-z^{-1}}\right]_{z=1}\notag\\
&\hspace{5cm}=\dfrac{1}{(q;q)_\infty}\left[\dfrac{d^2}{dz^2}\sum_{n=-\infty}^\infty q^{(3n^2+n)/2}\left(\dfrac{1-z^{6n+1}}{1-z}\right)\right]_{z=1}\notag\\
&\hspace{5cm}=\dfrac{1}{(q;q)_\infty}\left[\dfrac{d^2}{dz^2}\sum_{n=-\infty}^\infty q^{(3n^2+n)/2}\sum_{j=0}^{6n}z^{-3n+j}\right]_{z=1}\notag\\
&\hspace{5cm}=\dfrac{1}{(q;q)_\infty}\left[\dfrac{d^2}{dz^2}\sum_{n=-\infty}^\infty q^{(3n^2+n)/2}\sum_{j=0}^{6n}(-3n+j)(-3n+j-1)z^{-3n+j-2}\right]_{z=1}\notag\\
&\hspace{5cm}=\dfrac{1}{(q;q)_\infty}\sum_{n=-\infty}^\infty q^{(3n^2+n)/2}n(3n+1)(6n+1)\notag\\
&\hspace{5cm}=\dfrac{2q}{(q;q)_\infty}\sum_{n=-\infty}^\infty (6n+1)\frac{n(3n+1)}{2}q^{(3n^2+n)/2-1}\notag\\
&\hspace{5cm}=\dfrac{2q}{(q;q)_\infty}\dfrac{d}{dq}\sum_{n=-\infty}^\infty (6n+1)q^{(3n^2+n)/2}.
\end{align}
Now employing an identity of Ramanujan and proved by Gordon \cite[p. 20, Corollary 1.3.21]{spirit},
\begin{eqnarray}\label{ano-id}
\sum_{n=-\infty}^\infty (6n+1)q^{(3n^2+n)/2}=(q;q)_\infty^3(q;q^2)_\infty^2.
\end{eqnarray}
Thus, \eqref{2ndderiQPI} and \eqref{ano-id} give
\begin{eqnarray*}
&&\left[\dfrac{d^2}{dz^2}(zq;q)_\infty(z^{-1}q;q)_\infty(z^2q;q^2)_\infty(z^{-2}q;q^2)_\infty\right]_{z=1}=\dfrac{2q}{(q;q)_\infty}\dfrac{d}{dq}(q;q)_\infty^3(q;q^2)_\infty^2\notag\\
&&\hspace{4cm}=-2(q;q)_\infty^2(q;q^2)_\infty^2\left\{3\sum_{n=1}^\infty\dfrac{nq^n}{1-q^n}+2\sum_{n=1}^\infty\dfrac{(2n-1)q^{2n-1}}{1-q^{2n-1}}\right\}.
\end{eqnarray*}
\qed

\subsection{Proof of Theorem \ref{BLOcoeff}}
Using the Quintuple product identity \eqref{QPI1} on the right-hand side of Theorem \ref{spt-ana}, we obtain
\begin{eqnarray}\label{rhs-spt-ana}
&&(1-z)(1-z^{-1})\left[\dfrac{-1}{z(1-z^{-1}+z^{-2})}\cdot\dfrac{(z^{-2}q,z^2q)_\infty}{(-z^{-1}q,-zq)_\infty}\right]+\dfrac{(1+z^{-1})}{z(1+z^{-3})}\notag\\
&=& -\dfrac{(1-z)(1-z^{-1})}{z(1-z^{-1}+z^{-2})}\cdot\dfrac{\displaystyle\sum_{n=-\infty}^\infty q^{(3n^2+n)/2}(z^{3n}-z^{-3n-1})}{(1-z^{-1})(q)_\infty}+\dfrac{1}{z(1-z^{-1}+z^{-2})}\notag\\
&=&-\dfrac{1}{z(1-z^{-1}+z^{-2})}\left[\dfrac{(1-z)}{(q)_\infty}\sum_{n=-\infty}^\infty q^{(3n^2+n)/2}(z^{3n}-z^{-3n-1})-1\right].
\end{eqnarray}
Substituting $x=1/2$ and $t=z$ in \eqref{Chesec}, we obtain
\begin{equation}\label{Chesec1}
\sum_{n=0}^\infty U_n\left(\frac{1}{2}\right)z^n=\dfrac{1}{1-z+z^2}.    
\end{equation}
Thus, \eqref{Cherec}, \eqref{rhs-spt-ana} and \eqref{Chesec1} yield
\begin{eqnarray}\label{rhs-spt-Che}
&&(1-z)(1-z^{-1})\left[\dfrac{-1}{z(1-z^{-1}+z^{-2})}\cdot\dfrac{(z^{-2}q,z^2q)_\infty}{(-z^{-1}q,-zq)_\infty}\right]+\dfrac{(1+z^{-1})}{z(1+z^{-3})}\notag\\&=&\left(-z\sum_{m=0}^\infty U_m\left(\dfrac{1}{2}\right)z^m\right)\left[\dfrac{(1-z)}{(q)_\infty}\sum_{n=-\infty}^\infty q^{(3n^2+n)/2}(z^{3n}-z^{-3n-1})-1\right]\notag\\
&=&-\dfrac{(1-z)}{(q)_\infty}\sum_{m=1}^\infty U_{m-1}(1/2)z^m\sum_{n=-\infty}^\infty q^{(3n^2+n)/2}(z^{3n}-z^{-3n-1})+\sum_{m=1}^\infty U_{m-1}(1/2)z^m\notag\\
&=&-\dfrac{1}{(q)_\infty}\left(\sum_{m=1}^\infty U_{m}(1/2)z^m\right)\sum_{n=-\infty}^\infty q^{(3n^2+n)/2}(z^{3n}-z^{-3n-1})+\sum_{m=1}^\infty U_{m-1}(1/2)z^m\notag\\
&=&-\dfrac{1}{(q)_\infty}\sum_{m=1}^\infty\sum_{n=-\infty}^\infty U_{m}(1/2)q^{(3n^2+n)/2}(z^{m+3n}-z^{m-3n-1})+\sum_{m=1}^\infty U_{m-1}(1/2)z^m.\notag\\
&=&-\dfrac{1}{(q)_\infty}\left(\sum_{n=-\infty}^\infty \sum_{m=1}^\infty U_{m}(1/2)q^{(3n^2+n)/2}z^{m+3n}-\sum_{n=-\infty}^\infty \sum_{m=1}^\infty U_{m}(1/2)q^{(3n^2+n)/2}z^{m-3n-1}\right)\notag\\
&&+\sum_{m=1}^\infty U_{m-1}(1/2)z^m.
\end{eqnarray}
Making the change of variables $m\rightarrow m-3n$ and $m\rightarrow m+3n+1$ respectively on the two double series in the right-hand side of \eqref{rhs-spt-Che}, we obtain
\begin{eqnarray}\label{simplifyrhs}
&&(1-z)(1-z^{-1})\left[\dfrac{-1}{z(1-z^{-1}+z^{-2})}\cdot\dfrac{(z^{-2}q,z^2q)_\infty}{(-z^{-1}q,-zq)_\infty}\right]+\dfrac{(1+z^{-1})}{z(1+z^{-3})}\notag\\&&=-\dfrac{1}{(q)_\infty}\left(\sum_{n=-\infty}^\infty \sum_{m=3n+1}^\infty U_{m-3n}(1/2)q^{(3n^2+n)/2}z^{m}-\sum_{n=-\infty}^\infty \sum_{m=-3n}^\infty U_{m+3n+1}(1/2)q^{(3n^2+n)/2}z^{m}\right)\notag\\
&&+\sum_{m=1}^\infty U_{m-1}(1/2)z^m.
\end{eqnarray}
Using Theorem \ref{spt-ana} and \eqref{corBLO}, we immediately see that the left-hand side of \eqref{simplifyrhs} can be rewritten in the following way and we have:
\begin{eqnarray}\label{BLO-lhs}
&&\sum_{\substack{r,s,n\geq 0\\m\in\mathbb{Z}}}N(r,s,m-2s+2r,n)(-1)^{r+s+m}z^m q^n\notag\\
&&=-\dfrac{1}{(q)_\infty}\left(\sum_{n=-\infty}^\infty \sum_{m=3n+1}^\infty U_{m-3n}(1/2)q^{(3n^2+n)/2}z^{m}-\sum_{n=-\infty}^\infty \sum_{m=-3n}^\infty U_{m+3n+1}(1/2)q^{(3n^2+n)/2}z^{m}\right)\notag\\
&&+\sum_{m=1}^\infty U_{m-1}(1/2)z^m.
\end{eqnarray}
Multiplying both sides of \eqref{BLO-lhs} by $(q)_\infty$, employing Euler's pentagonal number theorem $(q)_{\infty}=\sum_{k=-\infty}^{\infty}(-1)^kq^{k(3k+1)/2}$, and simplifying, we get
\begin{eqnarray}\label{f15}
&&\sum_{\substack{n\geq 0\\m\in\mathbb{Z}}}\left(\sum_{\substack{r,s\geq 0\\0\leq \omega_k\leq n}}N\left(r,s,m-2s+2r,n-\omega_k\right)(-1)^{r+s+m+k}\right)z^m q^n\notag\\
&&=-\sum_{n=-\infty}^\infty \sum_{m=3n+1}^\infty U_{m-3n}(1/2)q^{(3n^2+n)/2}z^{m}+\sum_{n=-\infty}^\infty \sum_{m=-3n}^\infty U_{m+3n+1}(1/2)q^{(3n^2+n)/2}z^{m}\notag\\
&&+\sum_{n=-\infty}^\infty\sum_{m=1}^\infty(-1)^n U_{m-1}(1/2)z^mq^{(3n^2+n)/2}.
\end{eqnarray}
We define the following functions:
\begin{eqnarray}\label{npos}
&&f_1(m,n):=\begin{cases}
U_{m-3n}(1/2),&m>3n\\
0,&\text{otherwise},
\end{cases}
\hspace{0.7cm}
f_2(m,n):=\begin{cases}
U_{m+3n+1}(1/2),&m\geq -3n\\
0,&\text{otherwise},
\end{cases}\notag\\
&&f_3(m,n):=\begin{cases}
(-1)^nU_{m-1}(1/2),&m\geq 1\\
0,&\text{otherwise}
\end{cases}
\end{eqnarray}
\begin{eqnarray}\label{nneg}
&&\tilde{f}_1(m,n):=\begin{cases}
U_{m+3n}(1/2),&m>-3n\\
0,&\text{otherwise},
\end{cases}
\hspace{0.7cm}
\tilde{f}_2(m,n):=\begin{cases}
U_{m-3n+1}(1/2),&m\geq 3n\\
0,&\text{otherwise},
\end{cases}\notag\\
&&\tilde{f}_3(m,n):=\begin{cases}
(-1)^nU_{m-1}(1/2),&m\geq 1\\
0,&\text{otherwise}.
\end{cases}
\end{eqnarray}
Consider the first double series
\begin{align}
&-\sum_{n=-\infty}^\infty \sum_{m=3n+1}^\infty U_{m-3n}(1/2)q^{(3n^2+n)/2}z^{m}   \nonumber\\ &=-\sum_{n=1}^{\infty}\sum_{m=-3n+1}^\infty U_{m+3n}(1/2)q^{(3n^2-n)/2}z^{m}-\sum_{n=0}^{\infty} \sum_{m=3n+1}^\infty U_{m-3n}(1/2)q^{(3n^2+n)/2}z^{m}\nonumber\\
&=-\sum_{n=1}^{\infty}\sum_{m=-\infty}^{\infty}\tilde{f_{1}}(m, n)z^mq^{\omega_{-n}}-\sum_{n=0}^{\infty}\sum_{m=-\infty}^{\infty}f_{1}(m, n)z^mq^{\omega_{n}}.
\end{align}
Similarly the remaining two double series in \eqref{f15} can be written using $f_{2}(m, n)$, $\tilde{f_{2}}(m, n)$, $f_{3}(m, n)$ and $\tilde{f_{3}}(m, n)$ thereby leading to
\begin{eqnarray}\label{f16}
&&\sum_{\substack{n\geq 0\\m\in\mathbb{Z}}}\left(\sum_{\substack{r,s\geq 0\\0\leq \omega_k\leq n}}N\left(r,s,m-2s+2r,n-\omega_k\right)(-1)^{r+s+m+k}\right)z^m q^n\notag\\
&=&\sum_{n=0}^{\infty}\sum_{m=-\infty}^{\infty}\left(-f_1(m, n)+f_2(m, n)+f_3(m, n)\right)z^{m}q^{\omega_n}\notag\\
&&+\sum_{n=1}^{\infty}\sum_{m=-\infty}^{\infty}\left(-\tilde{f_1}(m, n)+\tilde{f_2}(m, n)+\tilde{f_3}(m, n)\right)z^{m}q^{\omega_{-n}}.
\end{eqnarray}
This establishes the result. \qed

Before proving Theorem \ref{ideNp}, we establish a crucial lemma which also appears to be new. One of the ideas employed in its proof resulted through a personal communication with George Andrews \cite{andrewsmail}.
\begin{lemma}\label{claim1lemma}
Let $p_{\text{sc}}(n)$ denote the number of self-conjugate partitions of $n$. Then
\begin{eqnarray}\label{claim1}
p_{sc}(n)=p(n)+2\sum_{j\geq 1}(-1)^jp(n-2j^2).
\end{eqnarray}
\end{lemma}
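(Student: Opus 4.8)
\textbf{Proof proposal for Lemma \ref{claim1lemma}.}

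The plan is to work entirely with generating functions. The standard generating function for self-conjugate partitions, obtained by recording the Durfee square of size $k$ together with the $k$ distinct odd ``hook arms'' extending to the right of the diagonal, is
\begin{equation*}
\sum_{n\geq 0}p_{\text{sc}}(n)q^n=\sum_{k\geq 0}\frac{q^{k^2}}{(q^2;q^2)_k}=\prod_{j\geq 1}\bigl(1+q^{2j-1}\bigr)=(-q;q^2)_\infty.
\end{equation*}
So the identity \eqref{claim1} is equivalent to the $q$-series identity
\begin{equation*}
(-q;q^2)_\infty=\frac{1}{(q;q)_\infty}\Bigl(1+2\sum_{j\geq 1}(-1)^jq^{2j^2}\Bigr),
\end{equation*}
since $\sum_n p(n)q^n=1/(q;q)_\infty$ and $\sum_n\bigl(\sum_{j}(-1)^jp(n-2j^2)\bigr)q^n=\frac{1}{(q;q)_\infty}\sum_{j\geq 1}(-1)^jq^{2j^2}$. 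Thus it suffices to prove
\begin{equation}\label{eq-proofplan-key}
(q;q)_\infty(-q;q^2)_\infty=1+2\sum_{j\geq 1}(-1)^jq^{2j^2}=\sum_{j=-\infty}^{\infty}(-1)^jq^{2j^2}.
\end{equation}

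The key step is to recognize the left-hand side of \eqref{eq-proofplan-key} as a specialization of the Jacobi triple product identity, which in the form $\sum_{n=-\infty}^{\infty}(-1)^nx^nq^{n^2}=(q^2;q^2)_\infty(xq;q^2)_\infty(x^{-1}q;q^2)_\infty$ gives, on setting $x=1$ and replacing $q$ by $q$ (i.e. $\sum_{n}(-1)^nq^{n^2}=(q^2;q^2)_\infty(q;q^2)_\infty^2$). To match the right-hand side of \eqref{eq-proofplan-key}, I would instead take $x=1$ and replace $q^2$ by $q^2$ after a rescaling: from $\sum_{n=-\infty}^{\infty}(-1)^n q^{2n^2}=(q^4;q^4)_\infty(q^2;q^4)_\infty^2$, so it remains to verify the purely infinite-product identity
\begin{equation*}
(q;q)_\infty(-q;q^2)_\infty=(q^4;q^4)_\infty(q^2;q^4)_\infty^2.
\end{equation*}
This is elementary: write $(q;q)_\infty=(q;q^2)_\infty(q^2;q^2)_\infty$ and $(-q;q^2)_\infty=(q^2;q^4)_\infty/(q;q^2)_\infty$ (the latter from $(q^2;q^4)_\infty=(q;q^2)_\infty(-q;q^2)_\infty$), so the left side collapses to $(q^2;q^2)_\infty(q^2;q^4)_\infty$, and then $(q^2;q^2)_\infty=(q^2;q^4)_\infty(q^4;q^4)_\infty$ finishes it. Comparing coefficients of $q^n$ on the two sides of \eqref{eq-proofplan-key} yields \eqref{claim1}.

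The main obstacle, such as it is, is purely bookkeeping: getting the right Jacobi-triple-product specialization and the chain of base-changing product manipulations to line up, rather than anything conceptually deep. An alternative, perhaps cleaner, route — and the one hinted at by the acknowledgment to Andrews — is to give a direct combinatorial/involutive argument: partitions into distinct odd parts (counted by $(-q;q^2)_\infty$) can be paired against arbitrary partitions with a signed weight coming from partitions into distinct parts that are even squares doubled, i.e. to construct a sign-reversing involution on the set of pairs (partition, $\{\pm\}$-marked partition into parts $2j^2$) whose fixed points are exactly self-conjugate partitions. I would present the generating-function proof as the main argument since it is short and self-contained, and remark on the bijective interpretation.
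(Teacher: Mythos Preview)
Your proof is correct and takes a genuinely different, more economical route than the paper's. The paper argues by $2$-dissection: it splits both $\sum p(n)q^n$ and $\sum p_{\text{sc}}(n)q^n$ into even and odd parts, evaluates the dissected self-conjugate series via Slater's identities (38) and (39) to obtain $\sum p_{\text{sc}}(2n)q^n=\tfrac{(q)_\infty}{(-q)_\infty}\sum p(2n)q^n$ (and similarly for the odd case), and only then applies the Jacobi triple product identity $\tfrac{(q)_\infty}{(-q)_\infty}=\sum_j(-1)^jq^{j^2}$ before recombining. You bypass all of this by reducing the claim directly to the single product identity $(q;q)_\infty(-q;q^2)_\infty=(q^2;q^4)_\infty^2(q^4;q^4)_\infty$, which you then verify by elementary base-change factorizations. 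Your approach is shorter, avoids the appeal to Slater's list entirely, and exposes the identity as essentially a one-line consequence of the triple product; the paper's detour through $2$-dissections is correct but obscures this. (A minor aside: your speculation that the acknowledgment to Andrews points toward a bijective argument is not borne out by the paper---the communicated idea appears to be the use of the Slater identities in the dissection approach.)
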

\begin{proof}
To prove \eqref{claim1}, we need the two identities below
\begin{equation}\label{disspar}
\sum_{n=0}^\infty p(2n)q^n=\dfrac{(-q^3,-q^5,q^8;q^8)_\infty}{(q)_\infty^2},\hspace{1cm}\sum_{n=0}^\infty p(2n+1)q^n=\dfrac{(-q,-q^7,q^8;q^8)_\infty}{(q)_\infty^2},   
\end{equation}
which follow from the $2$-dissection of Gauss' triangular series identity:
\begin{eqnarray}
\psi(q):=\sum_{n=-\infty}^\infty q^{2n^2-n}=\dfrac{(q^2;q^2)_\infty^2}{(q;q)_\infty}. 
\end{eqnarray}
At this point, we note that \cite[p.~5430]{andrewsballantine}
\begin{eqnarray}
\sum_{n\geq 0}p_{\text{sc}}(n)q^n=\sum_{n\geq 0}\dfrac{q^{n^2}}{(q^2;q^2)_n},
\end{eqnarray}
which yields the following identities upon $2$-dissections:
\begin{eqnarray}\label{scdiss}
\sum_{n\geq 0}p_{\text{sc}}(2n)q^n=\sum_{n\geq 0}\dfrac{q^{2n^2}}{(q;q)_{2n}},\hspace{1cm}\sum_{n\geq 0}p_{\text{sc}}(2n+1)q^n=\sum_{n\geq 0}\dfrac{q^{2n^2+2n}}{(q;q)_{2n+1}}.
\end{eqnarray}
Using identities (38) and (39) from Slater's list \cite{LS}, \eqref{disspar} and \eqref{scdiss} yield
\begin{equation}\label{ppscdiss}
\begin{split}
&\sum_{n\geq 0}p_{\text{sc}}(2n)q^n=\sum_{n\geq 0}\dfrac{q^{2n^2}}{(q;q)_{2n}}=\dfrac{(-q^3,-q^5,q^8;q^8)_\infty}{(q^2;q^2)_\infty}=\dfrac{(q;q)_\infty}{(-q;q)_\infty}\sum_{n\geq 0}p(2n)q^n\\
&\sum_{n\geq 0}p_{\text{sc}}(2n+1)q^n=\sum_{n\geq 0}\dfrac{q^{2n^2+2n}}{(q;q)_{2n+1}}=\dfrac{(-q,-q^7,q^8;q^8)_\infty}{(q^2;q^2)_\infty}=\dfrac{(q;q)_\infty}{(-q;q)_\infty}\sum_{n\geq 0}p(2n+1)q^n.
\end{split}
\end{equation}
Using \eqref{jtp1} on the extreme right-hand sides of each identity in \eqref{ppscdiss} and comparing coefficients of $q^n$ on both sides of the identities together yield \eqref{claim1}. 
\end{proof}
\subsection{Proof of Theorem \ref{ideNp}}
From Theorem \ref{spt-ana} and \eqref{corBLO},
\begin{align}\label{iden}
&\sum_{\substack{r,s,n\geq 0\\m\in\mathbb{Z}}}N(r,s,m-2s+2r,n)(-1)^{r+s+m}z^m q^n\nonumber\\
&\qquad\qquad\qquad=(1-z)(1-z^{-1})\left[\dfrac{-1}{z(1-z^{-1}+z^{-2})}\cdot\dfrac{(z^{-2}q,z^2q)_\infty}{(-z^{-1}q,-zq)_\infty}\right]+\dfrac{(1+z^{-1})}{z(1+z^{-3})}.    
\end{align}
The idea is to take the fourth derivative on both sides of the above identity with respect to $z$, let $z=1$, and then equate the coefficients of $q^n$ on both sides of the resulting identity. We first concentrate on the right-hand side.

Invoking Lemma \ref{diffk} and using the definition of $D(z,q)$ in \eqref{Dzq}, it is seen using routine simplification that
\begin{align}\label{4thderiD}
&-\dfrac{1}{24}\left[\dfrac{d^4}{dz^4}\left\{(1-z)(1-z^{-1})\left(\dfrac{-1}{z(1-z^{-1}+z^{-2})}\cdot D(z,q)\right)+\dfrac{(1+z^{-1})}{z(1+z^{-3})}\right\}\right]_{z=1}\notag\\
&=-\dfrac{1}{2}D''(1,q)\notag\\
&=(q;q)_\infty^2(q;q^2)_\infty^2\left\{3\sum_{n=1}^\infty\dfrac{nq^n}{1-q^n}+2\sum_{n=1}^\infty\dfrac{(2n-1)q^{2n-1}}{1-q^{2n-1}}\right\},
\end{align}
where in the last step we invoked Theorem \ref{Quintupledoublederi}. 
Note that differentiating Euler's generating function for $p(n)$ leads to \cite[Equation (3.3)]{gea266}
\begin{equation}\label{diffeuler}
\sum_{n=1}^{\infty}np(n)q^n=\frac{1}{(q;q)_{\infty}}\sum_{n=1}^{\infty}\frac{nq^n}{1-q^n}.    
\end{equation}
Since $p_{\text{sc}}(n)$ equals the number of partitions of $n$ into distinct odd parts, we have
\begin{equation*}
\sum_{n=1}^{\infty}p_{\text{sc}}(n)q^n=(-q;q^2)_{\infty}.   
\end{equation*}
Replacing $q$ by $-q$ in the above identity and then differentiating both sides with respect to $q$ leads us to
\begin{equation}\label{diffpsc}
\sum_{n=1}^{\infty}(-1)^n np_{\text{sc}}(n)q^n=-(q;q^2)_{\infty}\sum_{n=1}^\infty\dfrac{(2n-1)q^{2n-1}}{1-q^{2n-1}}.
\end{equation}
Therefore from \eqref{diffeuler} and \eqref{diffpsc}, we deduce that
\begin{align}\label{logdiffsp}
3\sum_{n=1}^\infty\dfrac{nq^n}{1-q^n}+2\sum_{n=1}^\infty\dfrac{(2n-1)q^{2n-1}}{1-q^{2n-1}}
&=3(q;q)_\infty\sum_{n\geq 1}np(n)q^n-\dfrac{2}{(q;q^2)_\infty}\sum_{n\geq 1}(-1)^n np_{\text{sc}}(n)q^n\notag\\
&=3(q;q)_\infty\sum_{n\geq 1}np(n)q^n-2(-q;q)_\infty\sum_{n\geq 1}(-1)^n np_{\text{sc}}(n)q^n,
\end{align}
where in the last step, we employed the elementary result $1/(q;q^2)_{\infty}=(-q;q)_{\infty}$. Thus taking the fourth derivative on both sides of \eqref{iden} with respect to $z$ and then letting $z=1$, employing \eqref{4thderiD} and \eqref{logdiffsp}, and then multiplying both sides by $(-q;q)_\infty/(q;q)_\infty^2$, we obtain
\begin{eqnarray}\label{equivrhs}
&&-\dfrac{1}{24}\dfrac{(-q;q)_\infty}{(q;q)_\infty^2}\sum_{n=1}^{\infty}q^n\sum_{\substack{r,s\geq 0\\m\in\mathbb{Z}}}m(m-1)(m-2)(m-3)N(r,s,m-2s+2r,n)(-1)^{r+s+m}\notag\\&&\hspace{2cm}=3\dfrac{(q;q)_\infty}{(-q;q)_\infty}\sum_{n\geq 1}np(n)q^n-2\sum_{n\geq 1}(-1)^nnp_{\text{sc}}(n)q^n.
\end{eqnarray}
By an application of the Jacobi triple product identity \cite[Theorem 1.3.3]{spirit},
\begin{eqnarray}\label{jtp1}
1+2\sum_{j=1}^\infty(-1)^jq^{j^2}=\sum_{j\in\mathbb{Z}} (-1)^jq^{j^2}=\dfrac{(q;q)_\infty}{(-q;q)_\infty}
\end{eqnarray}
so that the right-hand side of \eqref{equivrhs} can be rewritten as
\begin{align}\label{finrhs}
&3\dfrac{(q;q)_\infty}{(-q;q)_\infty}\sum_{n\geq 1}np(n)q^n-2\sum_{n\geq 1}(-1)^nnp_{\text{sc}}(n)q^n\notag\\
&=\sum_{n\geq 1}\left[3np(n)+6\sum_{j\geq 1 }(-1)^j(n-j^2)p(n-j^2)-2(-1)^nnp_{\text{sc}}(n)\right]q^n\notag\\
&=\sum_{n\geq 1}\left[(3-2(-1)^n)np(n)+6\sum_{j\geq 1 }(-1)^j\left\{(n-j^2)p(n-j^2)-2(-1)^{n}np(n-2j^2)\right\}\right]q^n,
\end{align}
where in the last step, we invoked Lemma \ref{claim1lemma}. Lastly, observe that replacing $z$ by $z^{-1}$ in \eqref{corBLO} results in
\begin{equation}\label{symmetry}
N(r,s,m-2s+2r,n)=N(r,s,-m-2s+2r,n),
\end{equation}
which, in turn, implies that for fixed $r, s$ and $n$,
\begin{align}\label{symmetry1}
&\sum_{m\in\mathbb{Z}}m(m-1)(m-2)(m-3)N(r,s,m-2s+2r,n)(-1)^{m}  \nonumber\\  
&=\sum_{m\in\mathbb{Z}}(-1)^{m}m^2(m^2-11)N(r,s,m-2s+2r,n).
\end{align}
Hence invoking Lemma \ref{parlemma}, \eqref{equivrhs}, \eqref{finrhs} and \eqref{symmetry1}, the result now follows by comparing the coefficients of $q^n$ on both sides and by expressing the resulting right-hand side as a bilateral series.
\qed
\subsection{Proof of Theorem \ref{Th3}}
We compute the fourth derivative of the identity in Theorem \ref{spt-ana} with respect to $z$ and then let $z=1$. Using Lemma \ref{diffk}, we obtain
\begin{eqnarray}\label{deri4th}
-\dfrac{1}{4!}\Bigg[\dfrac{d^4}{dz^4}(1-z)(1-z^{-1})f(z)\Bigg]_{z=1}&=&\sum_{\ell=2}^4\dfrac{(-1)^\ell}{(\ell-2)!}\Bigg[\dfrac{d^{\ell-2}}{dz^{\ell-2}}f(z)\Bigg]_{z=1}\notag\\
&=&f(1)-f'(1)+\dfrac{f''(1)}{2}.
\end{eqnarray}
First, let $f(z)$ be defined by
\begin{eqnarray}\label{fdef}
f(z):=(1+z)(1+z^{-1})\sum_{n=1}^\infty\dfrac{(z^2q)_{n-1}(z^{-2}q)_{n-1}q^n}{(-zq)_n(-z^{-1}q)_n}
\end{eqnarray}
so that
\begin{eqnarray}\label{LHS1}
1+(1-z)(1-z^{-1})f(z)=\sum_{n\geq 0}\dfrac{(z^{2};q)_n(z^{-2};q)_nq^n}{(-zq;q)_n(-z^{-1}q;q)_n}.
\end{eqnarray}
Clearly, we have
\begin{equation}\label{term1}
f(1)=4\sum_{n=1}^\infty\dfrac{(q)_{n-1}^2q^n}{(-q)_n^2}.    
\end{equation}
By logarithmic differentiation, it follows that 
\begin{equation}\label{term21}
f'(z)=f(z)\left(\dfrac{1}{1+z}-\dfrac{z^{-2}}{1+z^{-1}}+\dfrac{\left(\displaystyle\sum_{n=1}^\infty\dfrac{(z^2q)_{n-1}(z^{-2}q)_{n-1}q^n}{(-zq)_n(-z^{-1}q)_n}\right)'}{\displaystyle\sum_{n=1}^\infty\dfrac{(z^2q)_{n-1}(z^{-2}q)_{n-1}q^n}{(-zq)_n(-z^{-1}q)_n}}\right).    
\end{equation}
Next, we have
\begin{eqnarray}\label{term22}
\left(\displaystyle\sum_{n=1}^\infty\dfrac{(z^2q)_{n-1}(z^{-2}q)_{n-1}q^n}{(-zq)_n(-z^{-1}q)_n}\right)'&=&\sum_{n=1}^\infty\dfrac{(z^2q)_{n-1}(z^{-2}q)_{n-1}q^n}{(-zq)_n(-z^{-1}q)_n}\left\{\sum_{k=1}^{n-1}\dfrac{-2zq^k}{1-z^2q^k}+\sum_{k=1}^{n-1}\dfrac{2z^{-3}q^k}{1-z^{-2}q^k}\right.\notag\\
&&\left.-\;\sum_{k=1}^n\dfrac{q^k}{1+zq^k}+\sum_{k=1}^n\dfrac{z^{-2}q^k}{1+z^{-1}q^k}\right\}.   
\end{eqnarray}
Thus \eqref{term22} yields
\begin{eqnarray}\label{term23}
\left[\left(\displaystyle\sum_{n=1}^\infty\dfrac{(z^2q)_{n-1}(z^{-2}q)_{n-1}q^n}{(-zq)_n(-z^{-1}q)_n}\right)'\right]_{z=1}=0.
\end{eqnarray}
Hence \eqref{term21} and \eqref{term23} yield
\begin{eqnarray}\label{derif0}
f'(1)&=&0.
\end{eqnarray}
Next, using \eqref{term21} we compute the second derivative of $f(z)$ to get
\begin{eqnarray}\label{term24}
f''(z)=f'(z)\cdot S(z)+f(z)\cdot S'(z), 
\end{eqnarray}
where 
\begin{eqnarray}\label{term25}
S(z):=\dfrac{1}{1+z}-\dfrac{z^{-2}}{1+z^{-1}}+\dfrac{\left(\displaystyle\sum_{n=1}^\infty\dfrac{(z^2q)_{n-1}(z^{-2}q)_{n-1}q^n}{(-zq)_n(-z^{-1}q)_n}\right)'}{\displaystyle\sum_{n=1}^\infty\dfrac{(z^2q)_{n-1}(z^{-2}q)_{n-1}q^n}{(-zq)_n(-z^{-1}q)_n}}.
\end{eqnarray}
We compute $S'(z)$ first. Let us further put 
\begin{eqnarray}\label{term26}
S_1(z):=\left(\displaystyle\sum_{n=1}^\infty\dfrac{(z^2q)_{n-1}(z^{-2}q)_{n-1}q^n}{(-zq)_n(-z^{-1}q)_n}\right)',\hspace{1cm}S_2(z):=\displaystyle\sum_{n=1}^\infty\dfrac{(z^2q)_{n-1}(z^{-2}q)_{n-1}q^n}{(-zq)_n(-z^{-1}q)_n}.
\end{eqnarray}
Then \eqref{term25} and \eqref{term26} yield
\begin{eqnarray}\label{term27}
S'(z)=-\dfrac{1}{(1+z)^2}+\dfrac{1+2z}{(z+z^2)^2}+\dfrac{S_2(z)\cdot S_1'(z)-S_1(z)\cdot S_2'(z)}{S_2^{2}(z)}.
\end{eqnarray}
Next, we note from \eqref{fdef} and \eqref{term26} that $f(z)=(1+z)(1+z^{-1})S_{2}(z)$ and from \eqref{term23} that $S_{1}(1)=0$. Then from \eqref{derif0}, \eqref{term24} and \eqref{term27} that
\begin{eqnarray}\label{prefinal}
f''(1)=4\cdot S_2(1)\cdot \left(-\dfrac{1}{4}+\dfrac{3}{4}+\dfrac{S_1'(1)}{S_2(1)}\right)=2\sum_{n=1}^\infty\dfrac{(q)_{n-1}^2q^n}{(-q)_n^2}+4\cdot S_1'(1).
\end{eqnarray}
Thus, it remains to calculate $S_1'(1)$. Before we do that, we note that $S_1(1)=0$ and this precisely happens since the quantity inside curly braces in the right-hand side of \eqref{term22} is zero. Let us call this quantity $C(z)$. Thus, in order to calculate $S_1(1)'$, we need only calculate $C'(1)$. 
\begin{eqnarray}\label{Cderi}
C'(z)&=&\sum_{k=1}^{n-1}\dfrac{(1-z^2q^k)\cdot(-2q^k)-(-2zq^k)\cdot(-2zq^k)}{(1-z^2q^k)^2}+\sum_{k=1}^{n-1}\dfrac{-2q^k(3z^2-q^k)}{(z^3-zq^k)^2}\notag\\
&-&\sum_{k=1}^n\dfrac{-q^{2k}}{(1+zq^k)^2}+\sum_{k=1}^n\dfrac{-q^k(2z+q^k)}{(z^2+zq^k)^2}.
\end{eqnarray}
Thus \eqref{Cderi} implies
\begin{eqnarray}\label{Cderival}
C'(1)&=&-8\sum_{k=1}^{n-1}\dfrac{q^k}{(1-q^k)^2}-2\sum_{k=1}^n\dfrac{q^k}{(1+q^k)^2}\notag\\
&=&-2\sum_{k=1}^{n-1}\dfrac{q^k(5+6q^k+5q^{2k})}{(1-q^{2k})^2}-\dfrac{2q^n}{(1+q^n)^2}.
\end{eqnarray}
Hence \eqref{term22},\eqref{term26}, \eqref{prefinal} and \eqref{Cderival} yield
\begin{eqnarray}\label{derif2}
f''(1)=2\sum_{n=1}^\infty\dfrac{(q)_{n-1}^2q^n}{(-q)_n^2}-8\sum_{n=1}^\infty\dfrac{(q)_{n-1}^2q^n}{(-q)_n^2}\left(\sum_{k=1}^{n-1}\dfrac{q^k(5+6q^k+5q^{2k})}{(1-q^{2k})^2}+\dfrac{q^n}{(1+q^n)^2}\right).
\end{eqnarray}
Thus \eqref{deri4th}, \eqref{LHS1}, \eqref{term1}, \eqref{derif0} and \eqref{derif2} yield
\begin{eqnarray}\label{LHS4thderi}
&&-\dfrac{1}{24}\left[\dfrac{d^4}{dz^4}\sum_{n\geq 0}\dfrac{(z^{2};q)_n(z^{-2};q)_nq^n}{(-zq;q)_n(-z^{-1}q;q)_n}\right]_{z=1}=5\sum_{n=1}^\infty\dfrac{(q)_{n-1}^2q^n}{(-q)_n^2}\notag\\&&\hspace{3cm}-4\sum_{n=1}^\infty\dfrac{(q)_{n-1}^2q^n}{(-q)_n^2}\left(\sum_{k=1}^{n-1}\dfrac{q^k(5+6q^k+5q^{2k})}{(1-q^{2k})^2}+\dfrac{q^n}{(1+q^n)^2}\right).    
\end{eqnarray}
Along with \eqref{spt-anaeqn} and \eqref{4thderiD}, this implies the result.
\qed
\subsection{Proof of Theorem \ref{spt-mod}}
Using Theorem \ref{Th3} and \eqref{blospt}, we have
\begin{eqnarray}\label{Ded13}
&4\displaystyle\sum_{n=1}^\infty\dfrac{(q)_{n-1}^2q^n}{(-q)_n^2}\left(\sum_{k=1}^{n-1}\dfrac{q^k(5+6q^k+5q^{2k})}{(1-q^{2k})^2}+\dfrac{q^n}{(1+q^n)^2}\right)
=-\dfrac{5}{4}\left(\dfrac{(q)_\infty^2}{(-q)_\infty^2}-1\right)&\notag\\&-\;\dfrac{(q)_\infty^2}{(-q)_\infty^2}\left\{5\displaystyle\sum_{n=1}^\infty\dfrac{nq^n}{1-q^n}-4\sum_{n=1}^\infty\dfrac{nq^{2n}}{1-q^{2n}}\right\}.&
\end{eqnarray}
Note that $\dfrac{(q)_\infty^2}{(-q)_\infty^2}=\dfrac{(q)_\infty^4}{(q^2;q^2)_\infty^2}$ and by logarithmic differentiation we have
\begin{equation}\label{logdiffeta}
\dfrac{d}{dq}\left(\dfrac{(q)_\infty^5}{(q^2;q^2)_\infty^2}\right)=\dfrac{(q)_\infty^5}{(q^2;q^2)_\infty^{2}}\left\{-5\displaystyle\sum_{n=1}^\infty\dfrac{nq^{n-1}}{1-q^n}+4\sum_{n=1}^\infty\dfrac{nq^{2n-1}}{1-q^{2n}}\right\}.    
\end{equation}
Thus, \eqref{logdiffeta} gives
\begin{align}\label{derieta13}
\dfrac{(q)_\infty^2}{(-q)_\infty^2}\left\{5\displaystyle\sum_{n=1}^\infty\dfrac{nq^n}{1-q^n}-4\sum_{n=1}^\infty\dfrac{nq^{2n}}{1-q^{2n}}\right\}=-\dfrac{q^{25/24}}{\eta(\tau)}\cdot\dfrac{d}{dq}\left(\dfrac{(q)_\infty^5}{(q^2;q^2)_\infty^2}\right)=-\dfrac{q^{25/24}}{\eta(\tau)}\cdot\dfrac{d}{dq}\left(q^{-1/24}\dfrac{\eta(\tau)^5}{\eta(2\tau)^2}\right)
\end{align}
and from \cite[Theorem 1.1]{Robert}, it follows that 
\begin{equation}\label{etatheta}
\dfrac{\eta(\tau)^5}{\eta(2\tau)^2}=\sum_{n=1}^\infty\left(\dfrac{n}{12}\right)nq^{\frac{n^2}{24}}.
\end{equation}
Thus
\begin{eqnarray}\label{derieta14}
\dfrac{d}{dq}\left(q^{-1/24}\dfrac{\eta(\tau)^5}{\eta(2\tau)^2}\right)=\dfrac{1}{24}\sum_{n=1}^\infty\left(\dfrac{n}{12}\right)n(n^2-1)q^{\frac{n^2-1}{24}-1}.
\end{eqnarray}
Combining \eqref{Ded13}, \eqref{derieta13} and \eqref{derieta14}, we get
\begin{align}
&4\displaystyle\sum_{n=1}^\infty\dfrac{(q)_{n-1}^2q^n}{(-q)_n^2}\left(\sum_{k=1}^{n-1}\dfrac{q^k(5+6q^k+5q^{2k})}{(1-q^{2k})^2}+\dfrac{q^n}{(1+q^n)^2}\right)\notag\\
&\hspace{6cm}=-\dfrac{5}{4}\left(\dfrac{(q)_\infty^2}{(-q)_\infty^2}-1\right)+\dfrac{1}{24\eta(\tau)}\sum_{n=1}^\infty\left(\dfrac{n}{12}\right)n(n^2-1)q^{\frac{n^2}{24}}\notag\\
&\hspace{6cm}=-\dfrac{5}{4}\left(\dfrac{(q)_\infty^2}{(-q)_\infty^2}-1\right)-\dfrac{1}{24}\dfrac{\eta(\tau)^4}{\eta(2\tau)^2}+\dfrac{1}{24}\cdot\dfrac{\theta(\tau)}{\eta(\tau)}
\end{align}
where we have used \eqref{etatheta} in the last step and where $\theta(\tau)$ is defined as in the theorem. The result now follows. It remains to show that $\theta(2\tau)$ satisfies the transformation in \eqref{thetatrans}. This follows by choosing $N=6,\;h=1,\;P(m)=m^3,\;A=[6]$ in \cite[Proposition 2.1]{Shimura}. 
\qed

\begin{center}
\textbf{Acknowledgements}
\end{center}
The authors sincerely thank George Andrews and Jeremy Lovejoy for their helpful comments. The first author is supported by the Swarnajayanti Fellowship Grant SB/SJF/2021-22/08 of SERB (Govt. of India) and thanks the agency for the support. The second author is an institute postdoctoral fellow at IIT Gandhinagar under the project IP/IITGN/MATH/AD/2122/15. He thanks the institute for the support.

\end{document}